\newtheorem{theorem}{Theorem}[section]
\newtheorem{lemma}[theorem]{Lemma}
\theoremstyle{definition}
\newtheorem{definition}[theorem]{Definition}
\newtheorem{problem}[theorem]{Problem}
\theoremstyle{remark}
\newtheorem{remark}[theorem]{Remark}
\numberwithin{equation}{section}
\begin{document}

\title[The Minkowski problem in Heisenberg groups]{The Minkowski problem in Heisenberg groups}

\author{Bin Chen}
\address{Bin Chen, Juan Zhang, Peibiao Zhao, and Xia Zhao \newline \indent }
\curraddr{}
\email{chenb121223@163.com}
\email{juanzhang892@163.com}
\email{pbzhao@njust.edu.cn}
\email{zhaoxia20161227@163.com}
\thanks{}

\author{Juan Zhang}
\address{}
\curraddr{}
\email{}

\author{Peibiao Zhao}
\address{}
\curraddr{}
\thanks{Research is supported in part by the Natural Science
Foundation of China (12271254; 12141104).}

\author{Xia Zhao}
\address{}
\thanks{Corresponding author: Peibiao Zhao}


\subjclass[2020]{53C17, 52A20}

\date{}

\dedicatory{}

\keywords{Minkowski problem, Horizontal surface area measure, Heisenberg group}

\begin{abstract}
  As we all know, the Minkowski type problem  is the cornerstone of the Brunn-Minkowski theory in Euclidean space.
  The Heisenberg group as a sub-Riemannian space is the simplest non-Abelian degenerate Riemannian space that is completely different from a Euclidean space.
  By analogy with the Minkowski type problem in Euclidean space, the  Minkowski type problem in Heisenberg groups is still open.
  In the present paper, we develop for the first time a sub-Riemannian version of Minkowski type problem in the horizontal distributions of Heisenberg groups, and  further give a positive answer to this sub-Riemannian Minkowski type problem via the variational method.
 \end{abstract}

\maketitle

\section{Introduction}
The Brunn-Minkowski theory of  convex bodies (i.e., a  compact, convex set), originated from Brunn's thesis \cite{Br} and Minkowski's paper \cite{M}, in Euclidean space $\mathbb{R}^n$ has a hot research field of convex geometry for nearly a century (the books \cite{G,S} are excellent references for this theory).
This theory centers around the study of geometric functionals of convex bodies as well as the differentials of these functionals, and depends heavily on analytic tools such as the cosine transform on the unit sphere and Monge-Amp\`{e}re type equations. The differentiations of different geometric functions are geometric measures, and these geometric measures are fundamental concepts in the Brun-Minkowski theory. It is well known that Minkowski type problem is one of the important components of the Brun-Minkowski theory. A Minkowski problem, roughly speaking, is a prescribed measure problem, which is also a characterization problem for a geometric measure generated by convex bodies.

\subsection{An overview of the Minkowski problem}

Without doubt, the most fundamental geometric functional
in the Brunn-Minkowski theory is the volume functional. Via the variation of volume functional, it produces the most important geometric measure: surface area measure.
The well-known {\it classical Minkowski problem}, which characterizes the surface area measure, asks
the necessary and sufficient conditions for a Borel measure on the unit sphere to be the surface area measure of a convex body.

More than a century ago, the classical Minkowski problem was first introduced and studied by Minkowski himself \cite{M0,M}, who demonstrated both existence and uniqueness of solutions for the problem when the given measure is either discrete or has a continuous density.
Aleksandrov \cite{A,A1} and Fenchel-Jessen \cite{FJ} independently solved the problem in 1938 for arbitrary measures.
Analytically, the Minkowski problem is equivalent to solving a  Monge-Amp\`{e}re equation. Establishing the
regularity of the solution to the Minkowski problem is difficult and has led to a long series of influential works (see, for example, Lewy \cite{L}, Nirenberg \cite{N}, Cheng-Yau \cite{C}, Pogorelov \cite{P1}, Caffarelli \cite{Ca}).

The $L_p$ Minkowski problem related to the $L_p$ surface area measure, introduced in \cite{L1}, is an extension of the classical Minkowski problem and has been intensively investigated and achieved great developments, see, e.g., \cite{BB,Ch,HLYZ,J,LO,LYZ2,St,Z} and the references therein.
The case of $p=1$ is the classical Minkowski problem, the case of $p=0$ is the logarithmic Minkowski problem (see \cite{BLYZ1}), and the case of $p=-n$ is the centro-affine Minkowski problem (see Chou-Wang \cite{CW}, Lu-Wang \cite{LW}, and Zhu \cite{Z1}).

Recently, much effort has been made to develop the nonhomogeneous problem analogous to the $L_p$ Minkowski problem; such a new problem is called the Orlicz-Minkowski problem of convex bodies (see \cite{Hab}).
It can be asked with the $L_p$ surface area measure (in $L_p$ Minkowski problem) replaced by the Orlicz surface area measure. Solutions to the Orlicz-Minkowski problem can be found in \cite{GH,Hab,HH,Li} and the references therein.

Moreover, the Minkowski-type problems (i.e., classical, $L_p$ and Orlicz Minkowski problems) also have dual analogues. It is only very recent that the dual curvature measures, which are dual to the surface area measures, were discovered in the seminal work \cite{Ha} by Huang, Lutwak, Yang and Zhang. The prescribed dual curvature measure problem is called the dual Minkowski problem.
Subsequently, Luwwak et al  \cite{LYZ5} and Zhu et al \cite{ZZ} introduced and studied dual $L_p$ Minkowski problem and dual Orlicz problem respectively.
See e.g., \cite{BH,BLYZZ,CL,HP,Ha,LLL,LSW,Zha1,Zha2} for more works in the rapidly developing dual Minkowski type problem.

With the development of Brunn-Minkowski theory in Euclidean setting, Li and Xu in a recent seminal article \cite{LX} started the study of Brunn-Minkowski theory in hyperbolic space, and solved the Horospherical
Minkowski problem and the more general Horospherical Christoffel-Minkowski problem based on the concept
of hyperbolic $p$-sum.

Motivated by the foregoing celebrated works, we  in the present paper will investigate and confirm the sub-Riemannian version of the Minkowski problem in sub-Riemannian Heisenberg groups.

\subsection{The sub-Riemannian Heisenberg group}

The Heisenberg group $\mathbb{H}^n$, the simplest non-Abelian sub-Riemannian space distinct from a Euclidean space, is the Lie group ($\mathbb{R}^{2n+1},\ast$), where we consider in $\mathbb{R}^{2n+1}\equiv\mathbb{R}^n\times
\mathbb{R}^n\times\mathbb{R}$ its usual differentiable structure and the noncommutative group law
\begin{align}\label{1.1}
g_1\ast g_2=
\bigg(z_1+z_2,t_1+t_2+\frac{1}{2}Imz_1\bar{z}_2\bigg),
\end{align}
for $g_1=(z_1,t_1)$ and $g_2=(z_2,t_2)\in\mathbb{H}^n$,
where $z=x_l+iy_l\in\mathbb{R}^{2n}$ and $t\in\mathbb{R}$.
The identity element is $e=(0,0)$,
the inverse element of $g=(z,t)$ is $g^{-1}=(-z,-t)$,
and the center of the group is $c=\{(z,t)\in\mathbb{H}^n: z=0\}$. Left translation by $g\in\mathbb{H}^n$ is the mapping $L_g: L_g(\tilde{g})=g\ast\tilde{g}$.
For any $\lambda>0$, the mapping $\delta_\lambda: \delta_\lambda(z,t)=(\lambda z,\lambda^2t)$
is called dilation.

The Lie algebra $\mathfrak{h}$ is spanned by the left-invatiant vector fields $\{X_l, Y_l, T\}$, $l=1,\cdots,n$ given by
\begin{align}\label{1.2}
\nonumber &X_l=dL_g\frac{\partial}{\partial x_l}=\frac{\partial}{\partial x_l}-\frac{y_l}{2}\frac{\partial}{\partial t},\\ &Y_l=dL_g\frac{\partial}{\partial y_l}=\frac{\partial}{\partial y_l}
+\frac{x_l}{2}\frac{\partial}{\partial t},\\
\nonumber &T=dL_g\frac{\partial}{\partial t}=\frac{\partial}{\partial t},
\end{align}
where $dL_g$ is the differential of the left-translation (\ref{1.1}). The only non-trivial bracket relations are $[X_l,Y_l]=-T$, $l=1,\cdots,n$.
In other words, any left invariant vector is a linear combination with real coefficients of the vector fields (\ref{1.2}). The Lie algebra $\mathfrak{h}$ admits a stratification. It decomposes  as the vector space direct sum
$$\mathfrak{h}=\mathfrak{h}_1\oplus\mathfrak{h}_2,$$
where $\mathfrak{h}_1$ is the subspace generated by $\{X_l,Y_l\}_{l=1}^n$ and $\mathfrak{h}_2=[\mathfrak{h}_1,\mathfrak{h}_1]$ is the one-dimensional space generated $T$. For this reason $\mathbb{H}^n$ is called a Carnot group of step 2.

The Haar measure on the Heisenberg group $\mathbb{H}^n$ is induced by the exponential mapping from the Lebesgue measure on $\mathfrak{h}$.
The natural volume in $\mathbb{H}^n$ is the Haar measure, which, up to a positive factor, coincides with Lebesgue measure in $\mathbb{R}^{2n+1}$. Lebesgue measure is also the Riemannian volume of the left-invariant metric for which $X_l,Y_l$ and $T$ are orthonormal. We denote by $Vol(E)$ the volume of a (Lebesgue) measurable set $E\subset\mathbb{H}^n$.

We denote by $\mathbb{H}_e$ the set of horizontal vectors of the form $h=(z,0)$. Fixed a point $g_0=(z_0,t_0)\in\mathbb{H}^n$,
the horizontal distribution $\mathbb{H}_{g_0}=g_0\ast\mathbb{H}_e=$
span$\{X_l(g_0),Y_l(g_0)\}_{l=1}^n$ is characterized by
\begin{align}\label{1.3}
\mathbb{H}_{g_0}=Ker\bigg(dt_0-\frac{1}{2}\langle z_0,Jdz_0\rangle_{\mathbb{R}^{2n}}\bigg),
\end{align}
where $J=( \ ^0_{-1}\ ^1_0)$.
From the left translation mapping we can set $g_0$ as the origin of $\mathbb{H}_{g_0}$.
Fixed $g_0\in\mathbb{H}^n$, any vector $v\in\mathbb{H}_{g_0}$ can be written as $v=(\bar{v},v_t)\in\mathbb{R}^{2n}\times\mathbb{R}$.
Let $\langle\cdot,\cdot\rangle_{\mathbb{H}^{n}}$ be a left invariant inner product on $\{X_l,Y_l\}_{l=1}^n$.
Then, the sub-Riemannian metric (or Kor\'{a}nyi gauge) is denoted as below (see \cite{FP})
\begin{align}\label{1.4}
\langle v,\omega\rangle_{\mathbb{H}^{n}}=
\langle\bar{v},\bar{\omega}\rangle_{\mathbb{R}^{2n}},\ \
\forall v,\omega\in\mathbb{H}_{g_0},\ g_0\in\mathbb{H}^{n},
\end{align}
where $\langle\cdot,\cdot\rangle_{\mathbb{R}^{2n}}$ is the standard scalar product on $\mathbb{R}^{2n}$.

A Lipschitz curve $\gamma(t): [0,1]\rightarrow\mathbb{H}^n$ is horizontal if $\gamma^\prime\in(\gamma(t))\ast\mathbb{H}_e$ for a.e. $t\in[0,1]$. Equivalently, $\gamma$ is horizontal if there exist functions $f_l\in L^\infty([0,1])$, $l=1,\cdots,2n$, such that
$$\gamma^\prime(t)=\sum_{l=1}^nf_lX_l(\gamma)
+f_{n+l}Y_l(\gamma),\ a.e.\ on\ [0,1].$$
The coefficients $f_l$ are unique, and by the structure of the vector fields $X_l$ and $Y_l$ they satisfy $f_l=\gamma^\prime_l$,  where $\gamma=(\gamma_1,\cdots,\gamma_{2n+1})$ are the coordinates of $\gamma$ given by the identification $\mathbb{H}^n=\mathbb{R}^{2n+1}$.

Given two points $g$ and $\tilde{g}$ we consider the set of all possible horizontal curves joining these two points
$$\Gamma(g,\tilde{g})=\{\gamma\ horizontal\ curve:\ \gamma(0)=g,\ \gamma(1)=\tilde{g}\}.$$
This set is never empty by Chow theorem (see \cite{Be}). The Carnot-Caratheodory (abbreviated CC) distance is then defined as the infimum of the length of horizontal curves of the set $\Gamma$:
$$d_{CC}(g,\tilde{g})=\inf_{\Gamma(g,\tilde{g})}
\int_0^1|\gamma^\prime(t)|dt.$$
The curve $\gamma$ is called a geodesic or length minimizing curve joining $g$ and $\tilde{g}$.
The CC ball of radius $r>0$ centered at a point $g\in\mathbb{H}^n$ is defined by
$$B_r(g)=\{\tilde{g}\in\mathbb{H}^n: d_{CC}(g,\tilde{g})<r\}.$$
We also let $B_r=B_r(0)$.
The CC gauge is given by
$$|g|_{CC}=d_{CC}(e,g).$$
The CC distance, being constructed in terms of left invariant vector fields, is
left invariant and dilation invariant on $\mathbb{H}^n$.  Namely, for any $p,g,\tilde{g}\in\mathbb{H}^n$ and $\lambda>0$ there holds
$$d_{CC}(p\ast g,p\ast\tilde{g})=d_{CC}(g,\tilde{g});\ \ d_{CC}(\delta_\lambda(g),\delta_\lambda(\tilde{g}))=
d_{CC}(g,\tilde{g}).$$

An equivalent distance on $\mathbb{H}^n$ is defined by the
so-called Kor\'{a}nyi distance (see Section 2.2 of  \cite{Cap})
$$d_{\mathbb{H}}(g,\tilde{g})=\|\tilde{g}^{-1}\ast g\|_{\mathbb{H}},$$
and Kor\'{a}nyi gauge
$$\|g\|_{\mathbb{H}}^4=(x_l^2+y_l^2)^2+16t^2.$$
Clearly, $d_{\mathbb{H}}$ is homogeneous of order 1 with
respect to the dilations $(\delta_s): \|\delta_sg\|_{\mathbb{H}}=s\|g\|_{\mathbb{H}}$. Consequently, there exist constants $C_1, C_2>0$ so that
$$C_1\|g\|_{\mathbb{H}}\leq d_{CC}(g,0)\leq C_2\|g\|_{\mathbb{H}}$$
for any $g\in\mathbb{H}^n$.
This follows immediately from compactness of the Kor\'{a}nyi unit sphere
$$\mathcal{S}^{2n}=\{g\in\mathbb{H}^n: \|g\|_{\mathbb{H}}=1\}$$
and continuity of $g\mapsto d_{CC}(g,0)$.

Now we introduce the concept of convexity in horizontal distributions (see \cite{D} for detail).
A subset $\Omega\subset\mathbb{H}^n$ is called horizontal convex (abbreviated $H$-convex) if and only if the twisted convex combination
$g\ast\delta_\lambda(g^{-1}\ast\tilde{g})\in\Omega$ for any $g\in\Omega$ and every $\tilde{g}\in\Omega\cap\mathbb{H}_g$.
Since to every notion of convexity on  sets there is a naturally associated notion of convexity for functions, from the definition above we have:
Let $\Omega$ be a $H$-convex subset of $\mathbb{H}^n$. A function $u: \Omega\rightarrow\mathbb{R}$ is called $H$-convex if, for any $g, \tilde{g}\in\Omega$, one has
$$u(g\ast\delta_\lambda(g^{-1}\ast\tilde{g}))
\leq(1-\lambda)u(g)+\lambda u(\tilde{g}),\ \ \forall\lambda\in[0,1].$$
For more detailed research on convex functions and convex sets, one can refer to \cite{Cal,GT,GM,LuM,Mo1,SY,SY1} and therein for details.

\subsection{Definitions and main results}
Due to the existence of left translation and dilation  mapping families, the geometric structures in Heisenberg groups are fundamentally different from Euclidean spaces, so the study of Brunn-Minkowski theory in Heisenberg group $\mathbb{H}^n$ is still blank, which leads us to develop Brunn-Minkowski theory in $\mathbb{H}^n$.

In the present paper, we construct the horizontal support functions and horizontal geometric measure and other basic
convex geometric elements, and develop a sub-Riemannian version of Minkowski type problem in the horizontal distributions of $\mathbb{H}^n$.

Fix a point $g_0\in\mathbb{H}^n$,
let $\mathbb{H}_{g_0}$ be the horizontal distribution through $g_0$, and a compact $H$-convex subset with non-empty interior be a $H$-convex body.
For the horizontal vector field $V=\{X_l,Y_l\}_{l=1}^n$ in $\mathbb{H}_{g_0}$ and  the horizontal geodesic
$$\gamma: [0,1]\rightarrow\mathbb{H}_{g_0}, \ \
\gamma(t)=exp(-tY_l)exp(-tX_l)exp(tY_l)exp(tX_l)(g),$$
where $exp(tV)(g)$ is the flow of horizontal vector field $V$ at time $t$ starting from $g\in\mathbb{H}_{g_0}$.

For fixed $g_0\in\mathbb{H}^n$, let $\Omega\subset\mathbb{H}_{g_0}$ be a $H$-convex body.
We denote by $\overrightarrow{\gamma}_{(g_0g)}$ the position vector of a point $g$ relative to the origin point $g_0$ in the horizontal sense, which is also equivalent to a horizontal geodesic connecting $g_0$ and  $g$ with an initial tangent direction $\gamma^\prime(g_0)$.
Similarly, we denote by $\nu$ the position vector of a point $\tilde{g}\in\mathcal{S}^{2n}$ relative to the origin point $g_0$ in the horizontal sense.

Let $\Omega\subset\mathbb{H}_{g_0}$ be a $H$-convex body, and  $\nu(g)$ be the horizontal unit direction vector at $g\in\partial\Omega$. We denote $H_{\nu}$ by   the ``horizontal totally geodesic hypersurface" to $\partial\Omega$ at $g$ orthogonal to $\nu$ passing through $g_0$.
It is not hard to check that the ``horizontal totally geodesic hypersurface" to $\partial\Omega$ at $g$ orthogonal to $\nu$ passing through $g_0$
is essentially  equivalent to ``tangent plane" defined below (see e.g., \cite{Mo}).
$$H_{\nu}=\{g=(z,t)\in\partial\Omega: \langle\nu, z\rangle=0,\ t\in\mathbb{R}\}.$$

\begin{remark}\label{r1.0}
If $g$ belongs to the subspace $\mathbb{R}^{2n}\times\{0\}\subset\mathbb{H}_{g_0}$, it follows from the facts of geodesics in \cite{Haj} that the horizontal totally geodesic hypersurface $H_{\nu}$ is exactly  ``an Euclidean tangent plane".
\end{remark}

We now define the horizontal support function (abbreviated $H$-support function) as follows:

\begin{definition}\label{d1.1}
{\it Let $\Omega\subset\mathbb{H}_{g_0}$ be a $H$-convex body with $\emptyset\neq\Omega\neq\mathbb{H}_{g_0}$.
For the horizontal unit direction vector $\nu$,
the $H$-support function, denoted by $h_H(\Omega,\cdot)$, of $\Omega$ is defined as the horizontal geodesic distance from the origin $g_0$ to  $H_\nu$ at $g\in\partial\Omega$.}
\end{definition}
Since the Heisenberg group is isomorphic to Euclidean space, then the $H$-support function can be defined as mapping $h_H(\Omega,\cdot): \mathcal{S}^{2n}\rightarrow\mathbb{R}$, which is further expressed as
\begin{align}\label{1.5}
h_H(\Omega,\nu)=\max\{\langle
\overrightarrow{\gamma}_{(g_0g)},\nu
\rangle_{\mathbb{H}^n}: g\in\Omega\},
\end{align}
for the unit direction vector $\nu$.

Notice that if $g=(0,t)\in\partial\Omega$, $t\neq0$, then $H_\nu$ at $g$ orthogonal to $t$-axis, and there are an  infinite number of the horizontal geodesic connecting $g_0$ and $g$ with initial vector $\gamma^\prime_{g_0}$ just as in (\ref{1.5}) (see Theorem 2.A). In this case, we can do make the following constraints to ensure the uniqueness of the $H$-support function.
For any horizontal unit direction vector $\nu$, and $g=(0,t)\in\partial\Omega$, $t>0$, we can take a plane denoted by $\Pi_{g_0g}$ that crosses the points $g_0$, $g$ and the vector $\nu$ such that $\gamma^\prime_{g_0}$ belonging to $\Pi_{g_0g}$, as shown in Figure 1.
We can now choose the horizontal geodesic $\gamma_{1(g_0g)}$ with an angle of $[0,\frac{\pi}{2}]$ to $\nu$ as the only horizontal geodesic connecting $g_0$ and $g$, and regard $\overrightarrow{\gamma}_{1(g_0g)}$ as the position vector $\overrightarrow{\gamma}_{(g_0g)}$ in (\ref{1.5}). For $t<0$, we can do the similar convention.
\begin{figure}[h!]
\centering
\begin{minipage}{15cm}
\centering
\includegraphics[scale=0.5]{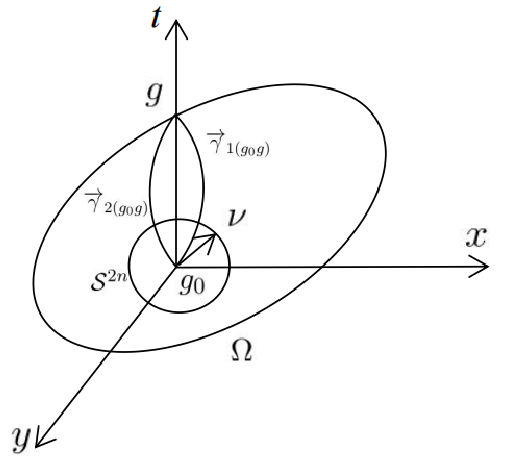}
\end{minipage}
\caption{$H$-support function of $g=(0,t)$.}
\end{figure}

From Definition \ref{d1.1} we can easily verify that $h_H(\Omega,u\ast\nu)\leq h_H(\Omega,u)+h_H(\Omega,\nu)$ and  $h_H(\Omega,\delta_\lambda\nu)=\lambda h_H(\Omega,\nu)$ for $\lambda\in[0,1]$. Hence, $h_H(\Omega,\cdot)$ is a $H$-convex function if $\Omega\neq\mathbb{H}_{g_0}$.
In particular, we get that $h_H(\Omega,\nu)=\langle\overrightarrow{\gamma}_{(g_0g)},
\nu\rangle$ if and only if $\Omega=\{g\}$.
It is also clear from the definition that $h_H(\Omega,\nu)\leq h_H(\tilde{\Omega},\nu)$ if and only if $\Omega\subset\tilde{\Omega}$. Further, we note that $h_H(\delta_\lambda\Omega,\nu)=\lambda h_H(\Omega,\nu)$ for $\lambda\in[0,1]$.

\begin{remark}\label{r1.2}
From the fact of geodesics in \cite{Haj}, if $g$ belongs to the subspace $\mathbb{R}^{2n}\times\{0\}\subset\mathbb{H}_{g_0}$, it is easy to know that $\overrightarrow{\gamma}_{(g_0g)}$ is a unique geodesic and that its length is equal to the Euclidean length $|\overrightarrow{g_0g}|$ of segment $\overrightarrow{g_0g}$. In this case, the $H$-support function is the Euclidean support function.
\end{remark}

A horizontal Gauss map (abbreviated $H$-Gauss map), denoted by $\nu_H$, is defined as follows:
\begin{definition}\label{d1.3}
{\it For $g_0\in\mathbb{H}^n$, let $\Omega\subset\mathbb{H}_{g_0}$ be a $H$-convex body with smooth boundary $\partial\Omega$. The unit outer normal to $\partial\Omega$, denoted by $\nu_H$, is well defined for $\mathcal{H}^{2n}$ almost all point on $\partial\Omega$.
The map $\nu_H: \partial\Omega\rightarrow\mathcal{S}^{2n}$
is called the $H$-Gauss map of $\Omega$.}
\end{definition}

Let $\nu_{H}^{-1}$ be the inverse of the mapping $\nu_H$.
Next, we introduce the horizontal surface area measure (abbreviated $H$-surface area measure).

\begin{definition}\label{d1.3}
{\it For a $H$-convex body $\Omega$ and a Borel measure $\mathfrak{m}$ on $\mathcal{S}^{2n}$, the $H$-surface area measure of $\Omega$ is the pushforward of $\mathfrak{m}$ by $\nu_H$ (i.e., $S_H(\Omega,\cdot)=\nu_{H,\Omega\sharp}^{-1}\mathfrak{m}$)
$$S_H(\Omega,\omega)=\nu_{H,\Omega\sharp}^{-1}\mathfrak{m}(\omega)
=\int_{\nu_{H,\Omega}^{-1}(\omega)}
d\mathcal{H}^{2n},$$
for every Borel set $\omega\subset\mathcal{S}^{2n}$.
Here $\mathcal{H}^{k}$ is the $k$-dimensional Haussdorff measure which depends on the Kor\'{a}nyi distance.}
\end{definition}

Consider a $C^1$ vector field $F$ with compact support on $\mathbb{H}^n$, and denote by $\{\phi_t\}_{|t|<\varepsilon}$, $t\in\mathbb{R}$, the associated group of diffeomorphisms (see \cite{Mo}).
By the variation of $Vol(\Omega_t)$ along any $H$-convex body $\Omega$ at $t=0$, the display expression of the $H$-surface area measure can be derived: (see Section 3 for details):
$$dS_H(\Omega,\omega)
=\det(\nabla_{ij}^Hh_H+h_H\delta_{ij}+A_{ij})da,$$
where $da$ is the area measure of $\mathcal{S}^{2n}$,  $\nabla^H$ is the horizontal gradient of $h_H$ on $\mathcal{S}^{2n}$, $\nabla_{ij}^H=(\nabla_i^H\nabla_j^H+\nabla_j^H\nabla_i^H)/2$ is the horizontal second order covariant derivative, $A_{ij}=\frac{1}{2}\langle \mathcal{G},\nabla_k^Hv\rangle_{\mathbb{H}^n}(
\langle\nabla_i^H\nabla_k^Hv,\nabla_j^Hv\rangle_{\mathbb{H}^n}
+\langle\nabla_j^H\nabla_k^Hv,
\nabla_i^Hv\rangle_{\mathbb{H}^n})$, and $\mathcal{G}$ denotes the inverse $H$-Gauss map.

Now we introduce the following Minkowski type problem with respect to the $H$-surface area measure, which may be called the {\it Heisenberg Minkowksi problem}.

\begin{problem}\label{p1.3}
Given a finite Borel measure $\mu$ on $\mathcal{S}^{2n}$, what are necessary and sufficient conditions for $\mu$ such that there exists a $H$-convex body $\Omega\subset\mathbb{H}_{g_0}$ satisfying
$$dS_H(\Omega,\cdot)=d\mu?$$
\end{problem}

There are generally two methods to solve the Minkowski problem in $\mathbb{R}^n$: variational method and curvature flow method. The existence of smooth solutions can be obtained through the curvature flow, while the variational method can only obtain the existence of weak solutions. Due to the degeneracy of the Laplace equation in the Heisenberg group, the existence of smooth solutions cannot be obtained through the curvature flow method. Therefore, we obtain the existence of solutions to {\bf Problem \ref{p1.3}} through the variational method, see Section 4 details.

\begin{theorem}\label{t1.4}
Let $\mu$ be a finite Borel measure on $\mathcal{S}^{2n}$ that is not concentrated on a closed hemisphere and satisfies
$$\int_{\mathcal{S}^{2n}}ud\mu(u)=0.$$
Then there exists a $H$-convex body $\Omega$ such that
$$S_H(\Omega,\cdot)=\mu.$$
\end{theorem}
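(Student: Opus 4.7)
The plan is to adopt Aleksandrov's variational approach, adapted to the $H$-convex setting. Consider the functional
\[
\Phi(\Omega) = \int_{\mathcal{S}^{2n}} h_H(\Omega, u)\, d\mu(u)
\]
and minimize it over the class of $H$-convex bodies $\Omega \subset \mathbb{H}_{g_0}$ subject to the fixed-volume constraint $Vol(\Omega) = 1$. Assuming a minimizer $\Omega_0$ exists, a Wulff-type perturbation $h_t = h_H(\Omega_0, \cdot) + tf$ should, by the formula for $dS_H$ stated in the introduction, yield
\[
\frac{d}{dt}\Big|_{t=0}\Phi(\Omega_t) = \int_{\mathcal{S}^{2n}} f\, d\mu, \qquad
\frac{d}{dt}\Big|_{t=0}Vol(\Omega_t) = \int_{\mathcal{S}^{2n}} f\, dS_H(\Omega_0, \cdot).
\]
The Lagrange multiplier rule then produces a $\lambda>0$ with $\mu = \lambda\, S_H(\Omega_0,\cdot)$, and the homogeneity $h_H(\delta_\lambda\Omega,\cdot)=\lambda h_H(\Omega,\cdot)$ allows one to absorb $\lambda$ by a suitable dilation.

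For the attainment step, take a minimizing sequence $\Omega_j$ of unit volume. The hypothesis that $\mu$ is not concentrated on any closed hemisphere of $\mathcal{S}^{2n}$ prevents $\Omega_j$ from elongating to infinity: if the Kor\'anyi diameter of $\Omega_j$ tended to infinity, then $h_H(\Omega_j,\cdot)$ would blow up on an open subset of $\mathcal{S}^{2n}$ lying in the complement of some closed hemisphere, forcing $\Phi(\Omega_j)\to\infty$ and contradicting the non-concentration hypothesis. The centroid condition $\int u\, d\mu = 0$ is used to absorb the translation ambiguity produced by the left translations $L_g$, by positioning each $\Omega_j$ so that a Steiner-point analogue sits at $g_0$. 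After this normalization the sequence is uniformly bounded, and an application of the classical Blaschke selection theorem in the Euclidean identification $\mathbb{H}_{g_0}\cong\mathbb{R}^{2n+1}$, combined with the closure of the $H$-convex class under Hausdorff convergence (from the convex-analysis references cited in Section 1.2), produces a convergent subsequence with $H$-convex limit $\Omega_0$ of unit volume. Pointwise convergence of $H$-support functions and dominated convergence then show that $\Omega_0$ attains the infimum.

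The first variation itself is computed in the spirit of Section 3, using a $C^1$ vector field $F$ with flow $\phi_t$. An envelope-theorem argument applied to the definition $h_H(\Omega, \nu) = \max_{g \in \Omega}\langle \overrightarrow{\gamma}_{(g_0 g)}, \nu\rangle_{\mathbb{H}^n}$ gives $\frac{d}{dt}h_H(\Omega_t, \nu)\big|_{t=0} = \langle F(g_\nu), \nu\rangle_{\mathbb{H}^n}$ at the support point $g_\nu$ where the maximum is attained. Combining this identity with the explicit formula $dS_H = \det(\nabla^H_{ij}h_H + h_H\delta_{ij} + A_{ij})\, da$ yields the two variational identities displayed above, and the Lagrange-multiplier conclusion completes the proof.

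The step I expect to be the main obstacle is the rigorous justification of the Wulff-type perturbation inside the sub-Riemannian class. Unlike its Euclidean counterpart, $h_H$ is coupled to the twisted horizontal geodesic structure of $\mathbb{H}_{g_0}$ through the non-uniqueness of horizontal geodesics ending at points of the center, which is what forced the special convention surrounding $\Pi_{g_0 g}$ in the introduction. Consequently, a perturbed function $h_H + tf$ need not correspond to an honest $H$-convex body for small $t$: one must verify that the operator $\nabla^H_{ij}h + h\delta_{ij} + A_{ij}$ remains positive definite under the perturbation, and that the associated one-parameter family $\Omega_t$ stays in the admissible class. Handling the correction term $A_{ij}$, which couples horizontal second derivatives of $h$ with horizontal first derivatives of the inverse $H$-Gauss map $\mathcal{G}$, is the genuinely sub-Riemannian difficulty absent from the Euclidean prototype.
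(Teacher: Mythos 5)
Your proposal is essentially the paper's own proof: the paper minimizes the degree-zero-homogeneous quotient $\Phi(f)=\|f\|/Vol(\Omega_f)^{\frac{1}{2n+1}}$ over Wulff shapes, which reduces to minimizing $\int_{\mathcal{S}^{2n}}h_H(\Omega,u)\,d\mu(u)$ at fixed volume exactly as you do, obtains boundedness of the minimizing sequence from the non-concentration hypothesis via the same cone estimate $\mathcal{R}_{\Omega_j}\langle u_j,v\rangle^+_{\mathbb{H}^n}\leq h_H(\Omega_j,v)$, applies Blaschke selection, and then derives $\mu=\lambda S_H(\Omega_0,\cdot)$ from the perturbation $h_H(\Omega_0,\cdot)+tf$ together with the variational formula of its Theorem 3.2, finally absorbing $\lambda$ by a dilation just as in your Lagrange-multiplier formulation. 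The only divergences are cosmetic: you deploy the centroid condition $\int_{\mathcal{S}^{2n}}u\,d\mu(u)=0$ through a Steiner-point normalization (the classical Euclidean device), whereas the paper instead asserts left-translation invariance of $h_H$ to position the limit body with the origin in its interior, and the Wulff-perturbation admissibility issue you flag at the end (whether $h_H+tf$ corresponds to an $H$-convex body, and the validity of the Aleksandrov-type variational lemma in the sub-Riemannian setting) is in fact also left unaddressed by the paper itself.
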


The organization of this paper is as follows.
In Section 2, we collect some basic knowledge about the Heisenberg group $\mathbb{H}^n$.
In Section 3, through calculating the variation of
volume by use the associated group of diffeomorphisms,
we introduce the $H$-surface area measure and
Heisenberg Minkowski problem.
In Section 4, we give the existence of solutions to Heisenberg Minkowski problem by the variational method.
In Section 5, we provide the relationship between the $H$-support function and the Euclidean support function, and propose a more general Orlicz Heisenberg Minkowski problem.

\section{Preliminaries}

In this section, we give some basic knowledge about sub-Riemannian Heisenberg group $\mathbb{H}^n$.

\subsection{The equations for geodesics in $\mathbb{H}^n$}

We recall here the equations for geodesics of unit length starting from $e=(0,0,0)$, since all other geodesics can be recovered by left translations and dilations (see e.g., \cite{AF,Cap,Ma,Mon,Y}).
Let $s\in[0,1]$ and $\phi\in[-2\pi,2\pi]$, and let $A_j, B_j\in\mathbb{R}$ such that $\sum_{l=1}^n(A_l^2+B_l^2)=1$,
then the set of equations
\begin{align}\label{2.0}
\begin{cases}
x_l(s)=\frac{A_l(1-\cos(\phi s))+B_l\sin(\phi s)}{\phi}\ \ \ \  l=1,\cdots,n\\
y_l(s)=\frac{-B_l(1-\cos(\phi s))+A_l\sin(\phi s)}{\phi}\ \ \ \  l=1,\cdots,n\\
t(s)=\frac{\phi s-\sin(\phi s)}{2\phi^2}
\end{cases}
\end{align}
defines a geodesic $\gamma(s)$ connecting $(0,0,0)$ with the point $(x_l,y_l,t)$, whose coordinates are
\begin{align*}
\begin{cases}
x_l(s)=x_l(1)=\frac{A_l(1-\cos\phi)+B_l\sin\phi}{\phi}\ \ \ \  l=1,\cdots,n\\
y_l(s)=y_l(1)=\frac{-B_l(1-\cos\phi)+A_l\sin\phi}{\phi}\ \ \ \  l=1,\cdots,n\\
t(s)=t(1)=\frac{\phi-\sin\phi}{2\phi^2}.
\end{cases}
\end{align*}
Of course, this gives a parameterization of the boundary of the CC ball with unit radius.
In the limit case $\phi=0$ one has the Euclidean geodesic.
The existence and uniqueness of geodesics in $\mathbb{H}^n$ has the following results:

\noindent{\bf Theorem 2.A}~~{\it
Let $g=(z,t)\neq e\in\mathbb{H}^n$, we have

i) If $z\neq0$, there exists a unique length minimizing geodesic connecting $e$ and $g$.

ii) If $z=0$, (i.e., $g$ belongs to the center of $\mathbb{H}^n$) then there is a one parameter family of length minimizing geodesics connecting $e$ and $g$, obtained by rotation of a single geodesic around the $t$-axis.}

In addition, there are some facts about geodesics (see e.g., \cite{Haj}): If $g$ belongs to the subspace $\mathbb{R}^{2n}\times\{0\}\subset\mathbb{R}^{2n+1}
=\mathbb{H}^n$, then it is easy to check that the straight line $\gamma(s)=sg$, $s\in[0,1]$ is a unique geodesic (up to a reparametrization) connecting origin $0$ to $g$. Indeed, it is easy to check that $\gamma$ is horizontal, and its length equals the Euclidean length $|\overrightarrow{0g}|$ of the segment $\overrightarrow{0g}$.

\subsection{Hausdorff measure}

For any $m\geq0$ and $\delta>0$, one can  define the premeasures on $\mathbb{H}^n$ associated with Kor\'{a}nyi distance (see \cite{Mo}).
The diameter of a set $K\subset\mathbb{H}^n$ is
$$diam K=\sup_{g,g^\prime\in K}d_{\mathbb{H}}(g,g^{\prime}).$$

(i) $\mathcal{H}_{\mathbb{H}}^m(E)=\lim_{\delta\rightarrow0}
\mathcal{H}_{\mathbb{H}}^{m,\delta}(E)$, where, up to a constant multiple,
$$\mathcal{H}_{\mathbb{H}}^{m,\delta}(E)=
\inf\bigg\{\sum_{i\in\mathbb{N}}(diam(K_i))^m:
E\subset\bigcup_{i\in\mathbb{N}}K_i, diam(K_i)<\delta\bigg\},$$
and the infimum is taken with respect to any non-empty family of closed subsets $\{K_i\}_{i\in\mathbb{N}}\subset\mathbb{H}^n$;

(ii) $\mathcal{S}_{\mathbb{H}}^m(E)=\lim_{\delta\rightarrow0}
\mathcal{S}_{\mathbb{H}}^{m,\delta}(E)$, where, up to a constant multiple,
$$\mathcal{S}_{\mathbb{H}}^{m,\delta}(E)=
\inf\bigg\{\sum_{i\in\mathbb{N}}(diam(B_i))^m:
E\subset\bigcup_{i\in\mathbb{N}}B_i, diam(B_i)<\delta\bigg\},$$
and the infimum is taken with respect to closed Kor\'{a}nyi balls $B_i$. The measures $\mathcal{H}_{\mathbb{H}}^m$ and $\mathcal{S}_{\mathbb{H}}^m$ are called, respectively, $m$-dimensional horizontal Hausdorff measure and $m$-dimensional horizontal spherical Hausdorff measure.

\subsection{Horizontal divergence and twisted convex combination}

Let $V$ be a smooth vector field in $\mathbb{H}^n=\mathbb{R}^{2n+1}$. We may express $V$ using both the basis $\{X_l, Y_l, T\}$ ($l=1,\cdots,n$) and the standard basis of vector fields of $\mathbb{R}^{2n+1}$:

\begin{align*}
V&=\sum_{l=1}^n(\varphi_lX_l+\varphi_{n+l}Y_l)+\varphi_{2n+1}T\\
&=\sum_{l=1}^n\bigg[\varphi_l\frac{\partial}{\partial x_l}
+\varphi_{n+l}\frac{\partial}{\partial y_l}
+(2y_l\varphi_l-2x_l\varphi_{n+l})\frac{\partial}{\partial t}\bigg]+\varphi_{2n+1}\frac{\partial}{\partial t},
\end{align*}
where $\varphi_l, \varphi_{n+l}, \varphi_{2n+1}\in C^{\infty}(\mathbb{H}^n)$ are smooth functions.
The standard divergence of $V$ is
\begin{align*}
div V&=\sum_{l=1}^n\bigg[\frac{\partial\varphi_l}{\partial x_l}+\frac{\partial\varphi_{n+l}}{\partial y_l}
+2y_l\frac{\partial\varphi_l}{\partial t}-2x_l\frac{\partial\varphi_{n+l}}{\partial t}\bigg]
+\frac{\partial\varphi_{2n+1}}{\partial t}\\
&=\sum_{l=1}^n(X_l\varphi_l+Y_l\varphi_{n+l})+T\varphi_{2n+1}.
\end{align*}
The vector field $V$ is said to be horizontal if $V(g)\in\mathbb{H}_g$ for all $g\in\mathbb{H}^n$.
These observations suggest the following definition.

Let $A\subset\mathbb{H}^n$ be an open set. The horizontal divergence of a vector valued mapping $\varphi\in C^1(A; \mathbb{R}^{2n})$ is defined by
\begin{align}\label{8}
div_H \varphi=\sum_{l=1}^n(X_l\varphi_l+Y_l\varphi_{n+l}).
\end{align}
Thus $div_H \varphi=div V$ is the standard divergence of the horizontal vector field $V$ with coordinates $\varphi=(\varphi_1,\cdots, \varphi_{2n})$ in the basis $\{X_l, Y_l\}$.

Let $g=(x,y,t)$ and $g^\prime=(x^\prime,y^\prime,t^\prime)$ be two points in $\mathbb{H}^n$, and $\lambda\in[0,1]$. The twisted convex combination is defined by (see \cite{D})
\begin{align}\label{2.1}
g\ast\delta_\lambda(g^{-1}\ast g^\prime)
&=(x+\lambda(x^\prime-x),y+\lambda(y^\prime-y),
t+2\lambda(xy^\prime-x^\prime y)\\
&\nonumber\ \ +\lambda^2(t^\prime-t+2x^\prime y-2xy^\prime)).
\end{align}
If $g^\prime\in\mathbb{H}_g$, i.e. $g^{-1}\ast g^\prime\in\mathbb{H}_e$, then we get
\begin{align}\label{2.2}
t^\prime-t+2(x^\prime y-xy^\prime)=0.
\end{align}
From (\ref{2.1}) and (\ref{2.2}), we derive for any $g^\prime\in\mathbb{H}_g$ that
\begin{align}\label{2.3}
g\ast\delta_\lambda(g^{-1}\ast g^\prime)=(1-\lambda)g+\lambda g^\prime.
\end{align}
Note also that (\ref{2.2}) and (\ref{2.3}) hold and $g^\prime\in\mathbb{H}_g$ if and only if $g^{-1}\ast g^\prime\in\mathbb{H}_e$.

\

\section{$H$-surface area measure and Heisenberg Minkowski problem}

In this section, we first calculate the second fundamental form of $H$-support function.
Then, we introduce the $H$-surface area measure and Heisenberg Minkowski problem in $\mathbb{H}_{g_0}$ by calculating the variation of volume of the associated group of diffeomorphisms.
Before that, we provide a fact about the covariant derivative of the Riemannian connection of the
left-invariant metric.

\begin{lemma}\label{l3.1}(\cite{Ma})
For the covariant derivatives of the Riemannian connection of the left-invariant metric, the following is true:
\begin{equation*}
\nabla=\left(
\begin{array}{ccccc}
0&T&\ \ &\cdots&-Y_1\\
-T&0&\ \ &\cdots&X_1\\
\vdots&\ \ &\ddots&\ \ &\vdots\\
0&\cdots&0&T&-Y_n\\
0&\cdots&-T&0&X_n\\
-Y_1&X_1&\ \ &\cdots&0
\end{array}
\right),
\end{equation*}
where the $(i,j)$-element in the table above equals $\nabla_{e_i}e_j$ for our basis
$$\{e_l,l=1,\cdots,2n+1\}=\{X_1,Y_1,X_2,Y_2,\cdots,T\}.$$
\end{lemma}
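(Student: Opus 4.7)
The plan is to apply the Koszul formula for the Levi-Civita connection of the left-invariant metric,
\begin{align*}
2\langle\nabla_X Y, Z\rangle = X\langle Y,Z\rangle + Y\langle X,Z\rangle - Z\langle X,Y\rangle + \langle[X,Y],Z\rangle - \langle[X,Z],Y\rangle - \langle[Y,Z],X\rangle,
\end{align*}
to every triple of basis vectors drawn from the orthonormal frame $e_1=X_1,\,e_2=Y_1,\,\ldots,\,e_{2n+1}=T$. Because these vector fields are left-invariant and the metric is left-invariant, the inner products $\langle e_i,e_j\rangle$ are constant functions on $\mathbb{H}^n$, so the first three derivative terms drop out. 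The identity collapses to the purely algebraic relation
\begin{align*}
2\langle\nabla_{e_i}e_j,e_k\rangle = \langle[e_i,e_j],e_k\rangle - \langle[e_i,e_k],e_j\rangle - \langle[e_j,e_k],e_i\rangle,
\end{align*}
so each covariant derivative is determined solely by the structure constants of $\mathfrak{h}$ together with the orthonormality of the frame.

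I would then substitute the bracket relations recorded in Section~1: the only nonzero commutators among basis elements are $[X_l,Y_l]=-T$ for $l=1,\ldots,n$, whereas $[X_l,X_m]=[Y_l,Y_m]=[X_l,T]=[Y_l,T]=0$ for all $l,m$ and $[X_l,Y_m]=0$ for $l\neq m$. This vanishing forces at once $\nabla_{X_l}X_m=\nabla_{Y_l}Y_m=0$ and $\nabla_{X_l}Y_m=\nabla_{Y_m}X_l=0$ whenever $l\neq m$, accounting for all the zero entries in the stated matrix. Only the triples generated by a single pair $(X_l,Y_l)$ together with $T$ can give a nonzero right-hand side.

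For each such triple I would evaluate the right-hand side against every $e_k$ and read off the coefficient: for instance $2\langle\nabla_{X_l}Y_l,T\rangle=\langle[X_l,Y_l],T\rangle=\langle-T,T\rangle$, while the pairings against $X_l$ or $Y_l$ vanish, giving the $\nabla_{X_l}Y_l$ column of the matrix, and analogous one-line calculations deliver $\nabla_{X_l}T$ and $\nabla_T Y_l$. The skew-symmetry relation $\nabla_{e_i}e_j-\nabla_{e_j}e_i=[e_i,e_j]$ then recovers the remaining transposed entries and serves as an internal consistency check against the sign pattern of the displayed table.

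The main obstacle is not conceptual but notational: the $(2n+1)\times(2n+1)$ table mixes the $n$ horizontal pairs $(X_l,Y_l)$ with the single central direction $T$, so one must carefully align the bracket convention $[X_l,Y_l]=-T$ (and any normalization constants implicit in the cited source \cite{Ma}) with the entries displayed in the lemma, taking care that factors of $\tfrac{1}{2}$ coming from Koszul end up absorbed the same way. Once the convention is pinned down, the lemma follows by a finite and entirely mechanical application of the Koszul formula entry by entry, with no analytic or geometric input beyond left-invariance and orthonormality.
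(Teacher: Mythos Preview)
The paper does not prove this lemma at all; it simply quotes the table from Marenich \cite{Ma}. Your Koszul-formula approach is the standard and correct way to compute the Levi-Civita connection of a left-invariant metric on a Lie group, and the mechanical procedure you outline is exactly what one does.

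Your caution about normalization is well placed, and in fact the discrepancy is real rather than merely potential. With the bracket convention $[X_l,Y_l]=-T$ recorded in Section~1 of this paper, your own sample computation gives $2\langle\nabla_{X_l}Y_l,T\rangle=\langle -T,T\rangle=-1$, hence $\nabla_{X_l}Y_l=-\tfrac{1}{2}T$, whereas the displayed table asserts $\nabla_{X_l}Y_l=T$. Running Koszul on the other nontrivial entries shows that the table is consistent with the convention $[X_l,Y_l]=2T$ (which is Marenich's), not with $[X_l,Y_l]=-T$. So the lemma as imported is not literally compatible with the commutator relation stated earlier in the paper; your method is sound, but applying it with the paper's own structure constants will reproduce the table only up to an overall factor of $-\tfrac12$ in the nonzero entries.
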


\begin{theorem}\label{t3.1}
Fix a point $g_0\in\mathbb{H}^n$, let $\Omega\subset\mathbb{H}_{g_0}$ be a $H$-convex body with smooth boundary $\partial\Omega$. Relative to the orthonormal frame $\{e_l,l=1,\cdots,2n\}$, the second fundamental form of $\partial\Omega$ is
$$\Pi_{ij}^H=\nabla_{ij}^Hh_H+h_H\delta_{ij}+A_{ij},$$
where $\nabla_{ij}^H\hat{=}\frac{1}{2}(\nabla_i^H\nabla_j^H+\nabla_j^H\nabla_i^H)$,
$A_{ij}=\frac{1}{2}\langle \mathcal{G},\nabla_k^Hv\rangle_{\mathbb{H}^n}(
\langle\nabla_i^H\nabla_k^Hv,\nabla_j^Hv\rangle_{\mathbb{H}^n}
+\langle\nabla_j^H\nabla_k^Hv,
\nabla_i^Hv\rangle_{\mathbb{H}^n})$, and $\mathcal{G}$ denotes the inverse $H$-Gauss map. Moreover, if $\Omega$ is a convex body in Euclidean space, then $A_{ij}=0$. In this case, the horizontal second fundamental form is the Euclidean second fundamental form (see e.g., \cite{U}).
\end{theorem}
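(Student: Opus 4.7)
The strategy is to mimic the classical Euclidean derivation of the second fundamental form from the support function: parameterize $\partial\Omega$ via the inverse $H$-Gauss map $\mathcal{G}:\mathcal{S}^{2n}\rightarrow\partial\Omega$, differentiate twice along an orthonormal horizontal frame on $\mathcal{S}^{2n}$, and then collect the sub-Riemannian correction terms into $A_{ij}$. The model identity I want to establish at the outset is
\begin{equation*}
\mathcal{G}(\nu)=h_H(\nu)\,\nu+\nabla^{H}h_H(\nu),
\end{equation*}
which follows from Definition \ref{d1.1} together with the fact that at a smooth boundary point the position vector decomposes into its radial component (measured by $h_H$) and its tangential component (the horizontal gradient of $h_H$ on $\mathcal{S}^{2n}$).

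Next I would pick an orthonormal horizontal frame $\{e_i\}_{i=1}^{2n}$ adapted to the evaluation point, taken to be ``normal'' for the horizontal connection so that the purely Euclidean part of $\nabla_i^{H}e_j$ vanishes at the point, while the $T$-components prescribed by Lemma \ref{l3.1} remain. Applying $\nabla_i^{H}$ to the above identity yields
\begin{equation*}
\nabla_i^{H}\mathcal{G}=(\nabla_i^{H}h_H)\,\nu+h_H\,(\nabla_i^{H}\nu)+\nabla_i^{H}\nabla^{H}h_H,
\end{equation*}
and pairing with $e_j$ in $\langle\cdot,\cdot\rangle_{\mathbb{H}^n}$ produces three contributions: $\nabla_i^{H}\nabla_j^{H}h_H$ from the third term, $h_H\,\delta_{ij}$ from the second term (since $\nabla_i^{H}\nu$ reduces to $e_i$ modulo twist), and the remaining $T$-curvature terms from Lemma \ref{l3.1} that do not appear in the Euclidean setting. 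Because $[X_l,Y_l]=-T$ obstructs commutativity of horizontal derivatives, the resulting expression is not manifestly symmetric in $i,j$, so I would symmetrize to obtain $\nabla_{ij}^{H}=\tfrac12(\nabla_i^{H}\nabla_j^{H}+\nabla_j^{H}\nabla_i^{H})$.

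The twist contributions that survive the symmetrization should then regroup into
\begin{equation*}
A_{ij}=\tfrac12\langle\mathcal{G},\nabla_k^{H}\nu\rangle_{\mathbb{H}^n}\bigl(\langle\nabla_i^{H}\nabla_k^{H}\nu,\nabla_j^{H}\nu\rangle_{\mathbb{H}^n}+\langle\nabla_j^{H}\nabla_k^{H}\nu,\nabla_i^{H}\nu\rangle_{\mathbb{H}^n}\bigr),
\end{equation*}
since these are exactly the quadratic-in-connection corrections produced when transporting a horizontal frame along $\nu$ while weighted against the position vector $\mathcal{G}$. For the Euclidean reduction I would invoke Remark \ref{r1.0}: when $g\in\mathbb{R}^{2n}\times\{0\}\subset\mathbb{H}_{g_0}$, the horizontal geodesics coincide with Euclidean segments, so the $T$-components of $\mathcal{G}$ and of $\nabla_k^{H}\nu$ vanish identically. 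The factor $\langle\mathcal{G},\nabla_k^{H}\nu\rangle_{\mathbb{H}^n}$ then collapses to the Euclidean inner product and the bracketed expression telescopes to zero, giving $A_{ij}=0$ and recovering the classical Euclidean formula $\Pi_{ij}=\nabla_{ij}h+h\,\delta_{ij}$.

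The main obstacle is the bookkeeping in the symmetrization step: I expect to spend most of the effort disentangling which pieces of $\nabla_i^{H}\nabla_j^{H}\mathcal{G}$ are genuine shape-operator contributions, which come from non-commutativity of $\{X_l,Y_l\}$ through Lemma \ref{l3.1}, and which are cancellations against derivatives of $h_H$ via the defining relation $\langle\mathcal{G},\nu\rangle_{\mathbb{H}^n}=h_H(\nu)$. Showing that after symmetrization these organize cleanly into $\nabla_{ij}^{H}h_H+h_H\delta_{ij}+A_{ij}$ with $A_{ij}$ in the asserted form is the technical heart of the argument.
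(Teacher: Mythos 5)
Your plan for the main formula is essentially the paper's own argument in a slightly different packaging. The paper differentiates the scalar relation $h_H(v)=\langle\mathcal{G}(v),v\rangle_{\mathbb{H}^n}$ twice, symmetrizes to get $\nabla_{ij}^Hh_H=\langle\mathcal{G},\nabla_{ij}^Hv\rangle_{\mathbb{H}^n}+\Pi_{ij}^H$, and then evaluates $\langle\mathcal{G},\nabla_{ij}^Hv\rangle_{\mathbb{H}^n}$ by expanding $\mathcal{G}=\langle\mathcal{G},v\rangle_{\mathbb{H}^n}v+\langle\mathcal{G},\nabla_k^Hv\rangle_{\mathbb{H}^n}\nabla_k^Hv$ in the moving frame $\{v,\nabla_k^Hv\}$; you instead take that expansion (your ``model identity'' $\mathcal{G}=h_H\nu+\nabla^Hh_H$, which is the same decomposition once one knows $\nabla_i^Hh_H=\langle\mathcal{G},\nabla_i^Hv\rangle_{\mathbb{H}^n}$ and the normalizations \eqref{3.2}--\eqref{3.3}) and differentiate it once before pairing with the frame. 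These two computations are equivalent, and your identification of the source of $A_{ij}$ --- derivatives of the moving frame weighted by $\langle\mathcal{G},\nabla_k^Hv\rangle_{\mathbb{H}^n}$ --- matches where it arises in the paper. Two remarks on economy: the paper never needs normal coordinates or the explicit connection table of Lemma \ref{l3.1}; everything follows from differentiating $\langle v,v\rangle_{\mathbb{H}^n}=1$ and $\langle\nabla_i^Hv,\nabla_j^Hv\rangle_{\mathbb{H}^n}=\delta_{ij}$, so your plan to track explicit $T$-components through Lemma \ref{l3.1} is harmless but unnecessary. Also, you should state the differentiated orthonormality identity \eqref{3.4} explicitly, since (see below) the whole structure of $A_{ij}$ and its Euclidean vanishing rest on it.

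The genuine gap is your Euclidean reduction $A_{ij}=0$. Your mechanism --- invoke Remark \ref{r1.0}, note the $T$-components of $\mathcal{G}$ and $\nabla_k^H\nu$ vanish, conclude the weight collapses to the Euclidean inner product and ``the bracketed expression telescopes to zero'' --- does not work as stated. In the Euclidean case the weight $\langle\mathcal{G},\nabla_k^Hv\rangle_{\mathbb{H}^n}=\nabla_k^Hh_H$ is the tangential component of the position vector, which is generically nonzero (it vanishes only for bodies centered so that the support function is critical in every direction), so no collapse of the inner product can make the prefactor die; and nothing about the metric being Euclidean makes $\langle\nabla_i^H\nabla_k^Hv,\nabla_j^Hv\rangle_{\mathbb{H}^n}+\langle\nabla_j^H\nabla_k^Hv,\nabla_i^Hv\rangle_{\mathbb{H}^n}$ vanish by itself. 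What actually kills $A_{ij}$, and what the paper uses, is the combination of (i) commutativity of second derivatives of the normal field in the Euclidean setting, $\nabla_i^H\nabla_k^Hv=\nabla_k^H\nabla_i^Hv$, together with (ii) identity \eqref{3.4}, $\langle\nabla_k^H\nabla_i^Hv,\nabla_j^Hv\rangle_{\mathbb{H}^n}+\langle\nabla_k^H\nabla_j^Hv,\nabla_i^Hv\rangle_{\mathbb{H}^n}=0$, obtained by applying $\nabla_k^H$ to $\langle\nabla_i^Hv,\nabla_j^Hv\rangle_{\mathbb{H}^n}=\delta_{ij}$: swapping the derivative order in each summand of $A_{ij}$ via (i) turns the bracket into exactly the left-hand side of (ii), which is zero. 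You correctly flag the bracket obstruction $[X_l,Y_l]=-T$ earlier in your write-up, but your Euclidean paragraph abandons that thread and substitutes a claim about the weight factor that is both insufficient and, as to the prefactor vanishing, false; to repair the proposal, replace that paragraph by the commutativity-plus-\eqref{3.4} argument.
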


\begin{proof}
Since $\partial\Omega$ is a smooth, closed and $H$-convex hypersurface, we may assume that $\partial\Omega$ is parametrized by inverse $H$-Gauss map. Without loss of generality, we assume that $\partial\Omega$ encloses the origin $g_0$. From (\ref{1.5}), the $H$-support function of $\partial\Omega$ can be defined, for $v\in\mathcal{S}^{2n}$, as
\begin{align}\label{3.1}
h_H(\partial\Omega,v)=\langle\mathcal{G}(v),
v\rangle_{\mathbb{H}^n}.
\end{align}

Let $\{e_l,l=1,\cdots,2n\}$ be a smooth orthonormal frame field on $\mathcal{S}^{2n}$, and let $\nabla^H$ be the horizontal gradient on $\mathcal{S}^{2n}$.
Differentiating (\ref{3.1}) we obtain
$$\nabla_i^Hh_H=\langle\nabla_i^H\mathcal{G},
v\rangle_{\mathbb{H}^n}
+\langle\mathcal{G},\nabla_i^Hv\rangle_{\mathbb{H}^n}.$$
Since $\nabla_i^H\mathcal{G}$ is tangential and $v$ is the normal, one has
$$\nabla_i^Hh_H=\langle\mathcal{G},
\nabla_i^Hv\rangle_{\mathbb{H}^n}.$$
Differentiating once again, and writing $(\nabla_i^H\nabla_j^H+\nabla_j^H\nabla_i^H)/2=\nabla_{ij}^H$, we obtain
\begin{align}\label{3.1.1}
\nonumber\nabla_{ij}^Hh_H&=\frac{1}{2}(\nabla_i^H\nabla_j^Hh_H
+\nabla_j^H\nabla_i^Hh_H)\\
\nonumber&=\frac{1}{2}(\langle\mathcal{G},
\nabla_i^H\nabla_j^Hv\rangle_{\mathbb{H}^n}
+\langle\mathcal{G},
\nabla_j^H\nabla_i^Hv\rangle_{\mathbb{H}^n}\\
&\ \ \ \ +\langle\nabla_i^H\mathcal{G},
\nabla_j^Hv\rangle_{\mathbb{H}^n}+\langle\nabla_j^H\mathcal{G},
\nabla_i^Hv\rangle_{\mathbb{H}^n})\\
\nonumber&=\langle\mathcal{G},
\frac{1}{2}(\nabla_i^H\nabla_j^Hv
+\nabla_j^H\nabla_i^Hv)\rangle_{\mathbb{H}^n}
+\Pi_{ij}^H\\
\nonumber&=\langle\mathcal{G},
\nabla_{ij}^Hv\rangle_{\mathbb{H}^n}+\Pi_{ij}^H,
\end{align}
where $\Pi_{ij}^H$ is the horizontal second fundamental form of $\partial\Omega$. To compute the term $\langle \mathcal{G},\nabla_{ij}^Hv\rangle_{\mathbb{H}^n}$ we differentiate the equation $\langle v,v\rangle_{\mathbb{H}^n}=1$ and obtain
\begin{align}\label{3.2}
\langle v,\nabla_i^Hv\rangle_{\mathbb{H}^n}=0,
\end{align}
and
\begin{align*}
\langle v,\nabla_i^H\nabla_j^Hv\rangle_{\mathbb{H}^n}
+\langle\nabla_{i}^Hv,\nabla_{j}^Hv\rangle_{\mathbb{H}^n}=0.
\end{align*}
Since $\{e_l,l=1,\cdots,2n\}$ is an orthonormal frame field defined in Lemma \ref{l3.1}, by a direct computation, we arrive at
\begin{align}\label{3.3}
\langle v,\nabla_i^H\nabla_j^Hv\rangle_{\mathbb{H}^n}
=-\langle\nabla_{i}^Hv,\nabla_{j}^Hv\rangle_{\mathbb{H}^n}
\ \hat{=}-\delta_{ij}.
\end{align}
Finally
\begin{align}\label{3.4}
\langle\nabla_k^H\nabla_i^Hv,
\nabla_{j}^Hv\rangle_{\mathbb{H}^n}
+\langle\nabla_k^H\nabla_j^Hv,
\nabla_{i}^Hv\rangle_{\mathbb{H}^n}=0.
\end{align}

From (\ref{3.2}) and (\ref{3.3}), we see that $\nabla_1^Hv,\cdots,\nabla_{2n}^Hv$ form an orthonormal basis for horizontal tangent space of $\partial\Omega$,
and hence
\begin{align*}
\langle\mathcal{G},\nabla_{ij}^Hv\rangle_{\mathbb{H}^n}
=\langle\mathcal{G},\frac{1}{2}(
\nabla_i^H\nabla_j^Hv
+\nabla_j^H\nabla_i^Hv)\rangle_{\mathbb{H}^n},
\end{align*}
where
\begin{align*}
\langle\mathcal{G},\nabla_i^H\nabla_j^Hv\rangle_{\mathbb{H}^n}
&=\langle\langle\mathcal{G},v\rangle_{\mathbb{H}^n}v,
\nabla_i^H\nabla_j^Hv\rangle_{\mathbb{H}^n}+\langle\langle \mathcal{G},\nabla_k^Hv\rangle_{\mathbb{H}^n}\nabla_k^Hv,
\nabla_i^H\nabla_j^Hv\rangle_{\mathbb{H}^n}\\
&=\langle\mathcal{G},v\rangle_{\mathbb{H}^n}\langle v,
\nabla_i^H\nabla_j^Hv\rangle_{\mathbb{H}^n}
+\nabla_i^H\langle\langle \mathcal{G},\nabla_k^Hv\rangle_{\mathbb{H}^n}\nabla_k^Hv,
\nabla_j^Hv\rangle_{\mathbb{H}^n}\\
&\ \ -\langle\nabla_i^H(\langle \mathcal{G},\nabla_k^Hv\rangle_{\mathbb{H}^n}\nabla_k^Hv),
\nabla_j^Hv\rangle_{\mathbb{H}^n}\\
&=-h_H\delta_{ij}+\nabla_i^H\langle \mathcal{G},\nabla_j^Hv\rangle_{\mathbb{H}^n}
-\nabla_i^H\langle\mathcal{G},\nabla_k^Hv\rangle_{\mathbb{H}^n}
\langle\nabla_k^Hv,\nabla_j^Hv\rangle_{\mathbb{H}^n}\\
&\ \ -\langle\mathcal{G},\nabla_k^Hv\rangle_{\mathbb{H}^n}
\langle\nabla_i^H\nabla_k^Hv,\nabla_j^Hv\rangle_{\mathbb{H}^n}\\
&=-h_H\delta_{ij}-\langle \mathcal{G},\nabla_k^Hv\rangle_{\mathbb{H}^n}
\langle\nabla_i^H\nabla_k^Hv,\nabla_j^Hv\rangle_{\mathbb{H}^n},
\end{align*}
and
\begin{align*}
\langle\mathcal{G},\nabla_j^H\nabla_i^Hv\rangle_{\mathbb{H}^n}
=-h_H\delta_{ij}-\langle \mathcal{G},\nabla_k^Hv\rangle_{\mathbb{H}^n}
\langle\nabla_j^H\nabla_k^Hv,\nabla_i^Hv\rangle_{\mathbb{H}^n}
\end{align*}
by virtue of (\ref{3.2}) and (\ref{3.3}). Thus we get
\begin{align}\label{3.5}
\langle\mathcal{G},\nabla_{ij}^Hv\rangle_{\mathbb{H}^n}
&=-h_H\delta_{ij}-A_{ij},
\end{align}
where $A_{ij}=\frac{1}{2}\langle \mathcal{G},\nabla_k^Hv\rangle_{\mathbb{H}^n}(
\langle\nabla_i^H\nabla_k^Hv,\nabla_j^Hv\rangle_{\mathbb{H}^n}
+\langle\nabla_j^H\nabla_k^Hv,
\nabla_i^Hv\rangle_{\mathbb{H}^n})$.

Substituting (\ref{3.5}) into (\ref{3.1.1}), we have
\begin{align*}
\Pi_{ij}^H=\nabla_{ij}^Hh_H+h_H\delta_{ij}+A_{ij}.
\end{align*}

Moreover, if $\Omega$ is a convex body in Euclidean space, then, for the outer unit normal $v$, $\nabla_i^H\nabla_j^Hv=\nabla_j^H\nabla_i^Hv$. Using (\ref{3.4}), we have
$$A_{ij}=\frac{1}{2}\langle \mathcal{G},\nabla_k^Hv\rangle_{\mathbb{H}^n}(
\langle\nabla_k^H\nabla_i^Hv,\nabla_j^Hv\rangle_{\mathbb{H}^n}
+\langle\nabla_k^H\nabla_j^Hv,
\nabla_i^Hv\rangle_{\mathbb{H}^n})
=0.$$
In this case, the horizontal second fundamental form is the Euclidean second fundamental form.
This ends the proof of Theorem \ref{t3.1}
\end{proof}

From Theorem \ref{t3.1}, we can compute the metric $g_{ij}$ of $\partial\Omega$.
By the Gauss-Weingarten relations
$$\nabla_i^Hv=\Pi_{ik}^Hg^{kl}\nabla_l^H\mathcal{G},$$
from which we obtain
$$\delta_{ij}=\langle\nabla_i^Hv,
\nabla_j^Hv\rangle_{\mathbb{H}^n}
=\Pi_{ik}^Hg^{kl}\Pi_{jm}^Hg^{ms}
\langle\nabla_l^H\mathcal{G},\nabla_s^H\mathcal{G}
\rangle_{\mathbb{H}^n}
=\Pi_{ik}^H\Pi_{jl}^Hg^{kl}.$$
Since $\Omega$ is $H$-convex, $\Pi_{ij}^H$ is invertible, and hence
\begin{align}\label{3.6}
g_{ij}=\Pi_{ik}^H\Pi_{jk}^H.
\end{align}

Next, through computing the variation of volume by use the associated group of diffeomorphisms, we introduce the of $H$-surface area measure.

\begin{theorem}\label{t3.2}
For a point $g_0\in\mathbb{H}^n$, let $\Omega\subset\mathbb{H}_{g_0}$ be a $H$-convex body, then
$$Vol^\prime(0)=\int_{\mathcal{S}^{2n}}h_H(\Omega,v)
\det(\nabla_{ij}^Hh_H+h_H\delta_{ij}+A_{ij})da,$$
where $da$ is the area element of $\mathcal{S}^{2n}$.
\end{theorem}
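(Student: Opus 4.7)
The plan is to express $Vol'(0)$ as a boundary integral and then pull it back to $\mathcal{S}^{2n}$ via the inverse $H$-Gauss map $\mathcal{G}$, at which point the integrand can be identified with $h_H$ times the determinant appearing in the statement.

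First, I would use the change-of-variables formula $Vol(\Omega_t)=\int_{\Omega}\det D\phi_t(g)\,dg$. Since the Haar measure on $\mathbb{H}^n$ coincides with Lebesgue measure on $\mathbb{R}^{2n+1}$, differentiating at $t=0$ yields $Vol'(0)=\int_{\Omega}\mathrm{div}\,F\,dg$, where $\mathrm{div}$ is the standard Euclidean divergence. Applying the divergence theorem, and using that the inner product $\langle\cdot,\cdot\rangle_{\mathbb{H}^{n}}$ only registers the horizontal parts of vectors (by (\ref{1.4})) while $\nu_H$ is itself horizontal by Definition \ref{d1.3}, one obtains
$$Vol'(0)=\int_{\partial\Omega}\langle F,\nu_H\rangle_{\mathbb{H}^{n}}\,d\mathcal{H}^{2n}.$$

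Second, I would parametrize $\partial\Omega$ by $\mathcal{G}:\mathcal{S}^{2n}\to\partial\Omega$. Combining (\ref{3.6}) with the Gauss-Weingarten identity, the pullback metric satisfies $\det(g_{ij})=\det(\Pi_{ij}^{H})^{2}$, so the surface area element transforms as $d\mathcal{H}^{2n}=\det(\Pi_{ij}^{H})\,da$. This gives
$$Vol'(0)=\int_{\mathcal{S}^{2n}}\langle F(\mathcal{G}(v)),v\rangle_{\mathbb{H}^{n}}\det(\Pi_{ij}^{H})\,da.$$
Specializing to the dilation vector field $F(g)=g$ (that is, $\phi_t=\delta_{1+t}$), formula (\ref{3.1}) yields $\langle F(\mathcal{G}(v)),v\rangle_{\mathbb{H}^{n}}=\langle\mathcal{G}(v),v\rangle_{\mathbb{H}^{n}}=h_H(\Omega,v)$. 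Finally, rewriting $\Pi_{ij}^{H}=\nabla_{ij}^{H}h_H+h_H\delta_{ij}+A_{ij}$ via Theorem \ref{t3.1} produces the claimed identity.

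The main obstacle I anticipate is the Jacobian computation in the second step: one must confirm that, relative to the horizontal orthonormal frame on $\mathcal{S}^{2n}$, the determinant of the pullback metric under $\mathcal{G}$ is exactly $\det(\Pi_{ij}^{H})^{2}$. This relies crucially on the $H$-convexity of $\Omega$ to ensure $\Pi_{ij}^{H}$ is positive definite (hence invertible), and on the sub-Riemannian metric (\ref{1.4}) being compatible with the horizontal Gauss-Weingarten relation. A secondary subtlety is bookkeeping between the ambient Euclidean divergence theorem and the horizontal inner product; here the coincidence of Haar and Lebesgue measures, together with the horizontality of $\nu_H$, allows the standard theorem to apply without modification.
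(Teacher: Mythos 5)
Your argument coincides with the paper's own proof essentially step for step: both pass from $Vol^\prime(0)$ to the boundary flux $\int_{\partial\Omega}\langle V,v\rangle_{\mathbb{H}^n}\,d\sigma_H^{n-1}$ via the divergence theorem (the paper invoking $\frac{d}{dt}\big|_{t=0}(d(\sigma_H^{n})_t)=\mathrm{div}\,V\,d\sigma_H^{n}$, you the equivalent $\det D\phi_t$ change of variables), then pull back to $\mathcal{S}^{2n}$ through the inverse $H$-Gauss map using $d\sigma_H^{n-1}=\sqrt{\det g_{ij}}\,da=\det(\Pi_{ij}^H)\,da$ from (\ref{3.6}), and finally substitute $\Pi_{ij}^H=\nabla_{ij}^Hh_H+h_H\delta_{ij}+A_{ij}$ from Theorem \ref{t3.1}. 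The only difference is cosmetic and in your favor: you make explicit the choice of the position field $F$ so that $\langle F(\mathcal{G}(v)),v\rangle_{\mathbb{H}^n}=h_H(\Omega,v)$ by (\ref{3.1}), a step the paper compresses into the phrase ``together with the definition of the $H$-support function.''
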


\begin{proof}
For any $\varphi\in C_c^1(\Omega; \mathbb{R}^{2n})$, let $V=\sum_{l=1}^n(\varphi_lX_l+\varphi_{n+l}Y_l)$ be the horizontal vector field with coordinates $\varphi=(\varphi_1,\cdots, \varphi_{2n})$.
By the standard divergence theorem, we have
\begin{align*}
Vol(0)^\prime=\int_\Omega\frac{d}{dt}
\bigg|_{t=0}(d(\sigma_H^{n})_t)
=\int_\Omega div Vd\sigma_H^{n}
=\int_{\partial\Omega}
\langle V,v\rangle_{\mathbb{H}^n} d\sigma_H^{n-1},
\end{align*}
where $d\sigma_H^{n}$ stands for the volume element of $\Omega$, and $d\sigma_H^{n-1}$ is the area element of $\partial\Omega$ at $\mathcal{G}$.
In the second equality we have used that  $\frac{d}{dt}|_{t=0}(d(\sigma_H^{n})_t)
=div Vd\sigma_H^{n}$ (see e.g., \cite{Fr,Sim}).

Let $da$ be the area element of $\mathcal{S}^{2n}$ at $v$.
By (\ref{3.6}), the area element $d\sigma_H^{n-1}$ of $\partial\Omega$ at $\mathcal{G}$ can be given by
\begin{align*}
d\sigma_H^{n-1}&=\sqrt{\det g_{ij}}da
=\sqrt{\det(\Pi_{ij}^H)^2}da\\
&=\det(\nabla_{ij}^Hh_H+h_H\delta_{ij}+A_{ij})da.
\end{align*}
This, together with the definition of $H$-support function, yields
\begin{align*}
Vol(0)^\prime=\int_{\mathcal{S}^{2n}}
h_H(\Omega,v)\det(\nabla_{ij}^Hh_H+h_H\delta_{ij}+A_{ij})da.
\end{align*}
This completes the proof of Theorem \ref{t3.2}
\end{proof}

\begin{definition}\label{d3.1}
{\it For a point $g_0\in\mathbb{H}^n$, let $\Omega\subset\mathbb{H}_{g_0}$ be a $H$-convex body. The $H$-surface area measure $S_H(\Omega,\cdot)$ of $\Omega$ can be defined by
$$dS_H(\Omega,\cdot)=\det(\nabla_{ij}^Hh_H
+h_H\delta_{ij}+A_{ij})da.$$}
\end{definition}

\begin{remark}\label{r3.2}
If $\Omega$ is a convex body in Euclidean space, then the $H$-surface area measure $S_H(\Omega,\cdot)$ is the Euclidean surface area measure (see \cite{S} or \cite{U}).
\end{remark}

By Theorem \ref{t3.2}, Definitions \ref{d1.3} and \ref{d3.1} and horizontal divergence theorem (see e.g., \cite{Mon1}), we have
\begin{align*}
Vol(\Omega)&=\int_\Omega d\sigma_H^n
=\frac{1}{2n+1}\int_\Omega div Vd\sigma_H^n\\
&=\frac{1}{2n+1}\int_{\partial\Omega}\langle V,\nu\rangle_{\mathbb{H}^n}d\mathcal{H}^{2n}\\
&=\frac{1}{2n+1}\int_{\mathcal{S}^{2n}}
h_H(\Omega,\nu)dS_H(\Omega,\nu).
\end{align*}

This motivates us to propose the following Heisenberg Minkowski problem for Definition \ref{d3.1}.

\begin{problem}\label{p3.1}
Given a finite Borel measure $\mu$ on $\mathcal{S}^{2n}$, what are necessary and sufficient conditions for $\mu$ such that there exists a $H$-convex body $\Omega\subset\mathbb{H}_{g_0}$ satisfying
$$dS_H(\Omega,\cdot)=d\mu?$$
\end{problem}

\

\section{Existence of solutions to Heisenberg Minkowski problem}

In this section, we study the existence of solutions to Heisenberg Minkowski problem. The main methods used to solve this problem is the so-called variational method. First we need to introduce the concept of Wulff shape.

Let $\omega$ be a closed subset of $\mathcal{S}^{2n}$ that is not contained on a closed hemisphere, and let $f: \mathcal{S}^{2n}\rightarrow\mathbb{R}$ be a positive continuous function. The closed convex set
$$\Omega_f=\bigcap_{u\in\omega}H^-(u,f(u)),$$
is bounded, since $\omega$ positively spans $\mathbb{H}_{g_0}$. Here
$$H^-(u,f(u))=\{g\in\mathbb{H}_{g_0}:
\langle\overrightarrow{\gamma}_{(g_0g)},u
\rangle_{\mathbb{H}^n}\leq f(u)
\ for \ all\ u\in\omega\}.$$
It is a $H$-convex body that contains the origin in its interior, since the restriction of $f$ to $\omega$ has a positive lower bound. The body $\Omega_f$ is called {\it the horizontal Wulff shape} associated with $f(u)$.

The Wulff shape can be also defined as the unique element of
$$\{\Omega\in\mathcal{K}_0^H: h_H(\Omega,u)\leq f(u), \ u\in\omega\},$$
where $\mathcal{K}_0^H$ denotes the set of $H$-convex body that contain the origin in their interior.

Now we prove the existence of solutions to Heisenberg Minkowski problem.

\begin{theorem}\label{t5.1}
For $g_0\in\mathbb{H}^n$ and the Kor\'{a}nyi unit sphere $\mathcal{S}^{2n}$,
let $\mu$ be a finite Borel measure on $\mathcal{S}^{2n}$ that is not concentrated on a closed hemisphere and satisfies
$$\int_{\mathcal{S}^{2n}}ud\mu(u)=0.$$
Then there exists a $H$-convex body $\Omega$ in $\mathbb{H}_{g_0}$ such that
$$S_H(\Omega,\cdot)=\mu.$$
\end{theorem}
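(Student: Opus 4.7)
The plan is to adopt the classical Aleksandrov--Fenchel variational method, adapted to the sub-Riemannian Heisenberg setting via the horizontal Wulff-shape construction just introduced. Concretely, I would consider the constrained minimization
\[
\text{minimize}\quad \Phi(f):=\int_{\mathcal{S}^{2n}} f(u)\, d\mu(u)\quad\text{subject to}\quad \mathrm{Vol}(\Omega_f)=1,
\]
where $f$ ranges over positive continuous functions on $\mathcal{S}^{2n}$ and $\Omega_f$ is its horizontal Wulff shape. Since $h_H(\Omega_f,\cdot)\le f$ pointwise, with equality on the support of $S_H(\Omega_f,\cdot)$, the infimum is unchanged if one restricts to $f=h_H(\Omega,\cdot)$ for $H$-convex bodies $\Omega$. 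A minimizer $\Omega^{*}$, together with a Lagrange-multiplier analysis, should yield $d\mu=\lambda\, dS_H(\Omega^{*},\cdot)$ for some $\lambda>0$; the desired body $\Omega$ is then produced by applying a Heisenberg dilation $\delta_\rho$ to absorb $\lambda$, using the homogeneity of $h_H$ recorded after Definition 1.1.

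For the existence of a minimizer, I would first establish uniform bounds on the $H$-support functions along a minimizing sequence $\{\Omega_k\}$. This is where both hypotheses on $\mu$ enter crucially. The non-concentration of $\mu$ on a closed hemisphere prevents $h_H(\Omega_k,\cdot)$ from blowing up in any fixed direction, since otherwise one could compress $\Omega_k$ in that direction and strictly decrease $\Phi$ without violating the volume constraint. The vanishing first moment $\int u\, d\mu(u)=0$ makes $\Phi$ invariant under horizontal translations modulo $\mu$, so the minimizing sequence may be translated to contain the origin. Combined with $\mathrm{Vol}(\Omega_k)=1$, this yields a uniform diameter bound. A Heisenberg-adapted Blaschke selection principle ($H$-convex bodies of bounded diameter and positive volume are sequentially compact in the Hausdorff-type topology induced by the Kor\'anyi gauge) then produces a convergent subsequence with limit $\Omega^{*}$; continuity of $\Omega\mapsto \int h_H(\Omega,\cdot)\, d\mu$ and of the volume functional under this convergence show that $\Omega^{*}$ is a minimizer.

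Next, I would carry out the first variation. Given $\eta\in C^\infty(\mathcal{S}^{2n})$, consider the family of Wulff shapes $\Omega_{f_t}$ associated to $f_t=h_H(\Omega^{*},\cdot)+t\eta$. Theorem 3.2 applied to this one-parameter family gives
\[
\left.\frac{d}{dt}\right|_{t=0}\mathrm{Vol}(\Omega_{f_t})=\int_{\mathcal{S}^{2n}}\eta(u)\, dS_H(\Omega^{*},u),
\]
while $\frac{d}{dt}\big|_{t=0}\Phi(f_t)=\int\eta\, d\mu$. Stationarity at the minimum under the volume constraint yields the Euler--Lagrange identity $d\mu=\lambda\, dS_H(\Omega^{*},\cdot)$, and a final Heisenberg dilation delivers $\Omega$ with $S_H(\Omega,\cdot)=\mu$.

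The main obstacle will be justifying this first variation when the minimizer $\Omega^{*}$ has only the regularity guaranteed by $H$-convexity and the compactness argument. Definition 3.1 expresses $dS_H$ in terms of $\nabla^{H}_{ij}h_H$ and the non-Euclidean correction tensor $A_{ij}$, objects that a priori require $C^{2}$ boundary data. The standard Euclidean workaround replaces the variational identity with a Wulff-shape inequality of Aleksandrov type,
\[
\int h_H(\Omega,\cdot)\, dS_H(\Omega^{*},\cdot)\le \int h_H(\Omega^{*},\cdot)\, dS_H(\Omega^{*},\cdot),
\]
together with weak continuity of $\Omega\mapsto S_H(\Omega,\cdot)$, which together bypass any direct Hessian computation. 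Proving the Heisenberg analogues of these two statements---especially in the presence of the correction tensor $A_{ij}$ arising from the non-commutativity of the horizontal connection recorded in Lemma 3.1---is where the bulk of the technical work lies.
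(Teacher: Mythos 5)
Your proposal follows essentially the same route as the paper's own proof: minimize $\int_{\mathcal{S}^{2n}} f\,d\mu$ over Wulff shapes at fixed volume (the paper uses the equivalent degree-zero quotient $\|f\|/\mathrm{Vol}(\Omega_f)^{1/(2n+1)}$), reduce to $H$-support functions by monotonicity, get a uniform bound on the minimizing sequence from the non-concentration hypothesis via $\mathcal{R}_{\Omega_j}\int_{\mathcal{S}^{2n}}\langle u_j,v\rangle^{+}_{\mathbb{H}^n}\,d\mu\leq\int_{\mathcal{S}^{2n}}h_H(\Omega_j,v)\,d\mu$, extract a limit by Blaschke selection, derive the Euler--Lagrange identity $d\mu=\lambda\,dS_H(\Omega^{*},\cdot)$ from the variational formula of Theorem \ref{t3.2}, and absorb $\lambda$ by a dilation. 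The only divergences are minor: you invoke the moment condition $\int u\,d\mu=0$ to make the functional translation-invariant (the classical Euclidean mechanism) where the paper instead appeals to a claimed left-translation invariance of $h_H$ to place the origin in the interior of the limit body, and your closing caveat---that Theorem \ref{t3.2} a priori requires $C^2$ boundary data and so the first variation at a merely $H$-convex minimizer needs an Aleksandrov-type approximation argument---flags a real technical point that the paper's proof passes over in silence.
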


\begin{proof}
Let
$$\mathcal{R}_{\Omega}=\max_{u\in\mathcal{S}^{2n}}
h_H(\Omega,u),$$
and
$$\|f\|=\int_{\mathcal{S}^{2n}}f(u)d\mu(u)$$
for a positive continuous function $f$ on $\mathcal{S}^{2n}$.

Let $C^+(\mathcal{S}^{2n})$ be the set of positive continuous functions on $\mathcal{S}^{2n}$.
Define the continuous functional
$$\Phi: C^+(\mathcal{S}^{2n})\rightarrow(0,\infty)$$
by
$$\Phi(f)=\frac{\|f\|}{Vol(\Omega_f)^{\frac{1}{2n+1}}},$$
where $\Omega_f$ is the Wulff shape of $f$.
It is easy to verify that $\Phi$ is homogeneous of degree $0$ and monotonically increasing.

Next, we consider the following minimization problem
$$\inf_{f\in C^+(\mathcal{S}^{2n})}\Phi(f).$$

We are searching for a function at which $\Phi$ attains a minimum. For any $f\in C^+(\mathcal{S}^{2n})$, from the definition of Wulff shape, there is $h_H(\Omega_f,\cdot)\leq f$. It implies that $\Phi(h_H(\Omega_f,\cdot))\leq\Phi(f)$.
Therefore, one can search for the minimum point of $\Phi$ among the $H$-support function of $H$-convex bodies that contain the origin in their interior. Since $\Phi$ is homogeneous of degree $0$, the minimum of $\Phi$ is
$$\inf\{\Phi(f): f\in C^+(\mathcal{S}^{2n})\}
=\{\|h_H(\Omega,\cdot)\|: Vol(\Omega)=\omega_{2n}\},$$
where $\omega_{2n}$ denotes the volume of Kor\'{a}nyi unit ball.

Let $\Omega_j$ be a minimizing sequence for $\Phi$, that is
\begin{align}\label{4.1}
\lim_{j\rightarrow\infty}\Phi(h_H(\Omega_j,\cdot))
=\{\|h_H(\Omega,\cdot)\|: Vol(\Omega)=\omega_{2n}\}.
\end{align}
The $H$-convex bodies $\Omega_j$ contain the origin in their interior.

We claim that the sequence $\Omega_j$ is bounded.

Since $\Omega_j$ is a minimizing sequence, we have $\Phi(h_H(\Omega_j,\cdot))<\Phi(1)+1$ when $j$ is large.
Let $\mathcal{R}_{\Omega_j}$ be the maximal radius of $\Omega_j$. If $u_j$ is the direction of this radius, then
$$\mathcal{R}_{\Omega_j}\langle u_j, v\rangle^+_{\mathbb{H}^n}\leq h_H(\Omega_j,v),$$
for all $v\in\mathcal{S}^{2n}$. We have
\begin{align*}
\mathcal{R}_{\Omega_j}\int_{\mathcal{S}^{2n}}\langle u_j,v\rangle^+_{\mathbb{H}^n}d\mu(v)&\leq\int_{\mathcal{S}^{2n}}
h_H(\Omega_j,v)d\mu(v)\\
&\leq\mu(\mathcal{S}^{2n})+1.
\end{align*}
Since $\mu$ is not concentrated on a closed hemisphere, there exists a constant $c>0$ such that
$$\int_{\mathcal{S}^{2n}}\langle u_j, v\rangle^+_{\mathbb{H}^n}d\mu(u)\geq c$$
for all $v\in\mathcal{S}^{2n}$.
Thus
$$\mathcal{R}_{\Omega_j}
\leq\frac{\mu(\mathcal{S}^{2n})+1}{c}.$$
Hence, the sequence $\Omega_j$ is bounded.

By the Blaschke selection theorem, there exists a subsequence of $\Omega_j$ which converges to a $H$-convex body $\Omega_0$ that contains the origin. Since $Vol(\Omega_j)=\omega_{2n}$ in (\ref{4.1}), there is $Vol(\Omega_0)=\omega_{2n}$.
Since the horizontal Wulff shape associated with the $H$-support function $h_H(\Omega,\cdot)$ of a $H$-convex body $\Omega$ is the body $\Omega$ itself, it follows from the assumption of $\Phi$ that
$$\Phi(h_H(\Omega,\cdot))=\frac{\|h_H(\Omega,\cdot)\|}
{Vol(\Omega)^{\frac{1}{2n+1}}}.$$
By the left translation invariance of Kor\'{a}nyi gauge, we have
$$\overrightarrow{\gamma}_{[(\tilde{g}\ast g_0)(\tilde{g}\ast g)]}
=\overrightarrow{\gamma}_{(g_0g)},$$
this implies that $h_H(\Omega,\cdot)$ is left translation invariant. Thus $\Phi(h_H(\Omega,\cdot))$ is translation invariant via the left translation invariance of $V(\Omega)$. Therefore, any left translation of $\Omega_0$ is still a minimal point of $\Phi$. By this, one can assume that $\Omega_0$ contains the origin in its interior. Thus,
$h_H(\Omega_0,u)>0$ for all $u\in\mathcal{S}^{2n}$.

Consider a deformation of $h_H(\Omega_0,u)$, for any $f\in C^+(\mathcal{S}^{2n})$,
$$h(u,t)=h_H(\Omega_0,u)+tf(u),$$
which is positive when $t$ is small. Since $h_H(\Omega_0,\cdot)$ is a minimal point of $\Phi$, we have
$$\frac{d}{dt}\Phi(h(u,t))\bigg|_{t=0}=0.$$
This equation can be written as
$$-\frac{\|h_H(\Omega_0,u)\|}{(2n+1)Vol(h_H(\Omega_0,u))}
Vol^\prime(h_H(\Omega_0,u))
+\int_{\mathcal{S}^{2n}}f(u)d\mu(u)=0.$$
By the variational formula in Theorem \ref{t3.2}
$$Vol^\prime(h_H(\Omega_0,u))
=\int_{\mathcal{S}^{2n}}f(u)dS_H(\Omega_0,u).$$
It follows that
$$\frac{\|h_H(\Omega_0,u)\|}{(2n+1)Vol(h_H(\Omega_0,u))}
\int_{\mathcal{S}^{2n}}f(u)dS_H(\Omega_0,u)
=\int_{\mathcal{S}^{2n}}f(u)d\mu(u)$$
for any $f\in C^+(\mathcal{S}^{2n})$. This gives that
$$\frac{\|h_H(\Omega_0,u)\|}{(2n+1)Vol(h_H(\Omega_0,u))}
S_H(\Omega_0,u)=\mu.$$
Let $\Omega$ be the dilation of $\Omega_0$ satisfiying
$$S_H(\Omega,\cdot)
=\frac{\|h_H(\Omega_0,u)\|}{(2n+1)Vol(h_H(\Omega_0,u))}
S_H(\Omega_0,\cdot).$$
Therefore, we obtain
$$S_H(\Omega,\cdot)=\mu.$$
\end{proof}

\

In the following Remark, we propose a more general Minkowski-type problem, the Orlicz Heisenberg Minkowski problem, in Heisenberg groups,  which will be investigated in our next article.

\begin{remark}\label{r4.1}
 Inspired by the more general Orlicz case of the classical Minkowski problem in Euclidean spaces,
we can naturally propose the following Orlicz form of Heisenberg Minkowski problem, which may be called the {\it Orlicz Heisenberg Minkowski problem} and stated as below

\begin{problem}\label{p4.1} (Orlicz Heisenberg Minkowski problem)
What are the necessary and sufficient conditions on a given function $\phi$ and a given finite Borel measure $\mu$ on $\mathcal{S}^{2n}$, such that there exists a $H$-convex body $\Omega$, satisfying
$$c\phi(h_H(\Omega,\cdot))dS_H(\Omega,\cdot)=d\mu,$$
where $c>0$ is a constant?
\end{problem}

\end{remark}


\begin{thebibliography}{11}

\bibitem{A} A. D. Alexandrov, {\it On the theory of mixed volumes of convex bodies. III. Extensions of two theorems of Minkowski on convex polyhedra to arbitrary convex bodies}, Mat. Sb. (N.S.), {\bf 3} (1938), 27-46.

\bibitem{A1} A. D. Alexandrov, {\it On the area function of a convex body}, Mat. Sb. (N.S.), {\bf 6} (1939), 167-174; English translation in A. D. Aleksandrov, Selected Works, Part 1, Chapter 9, pp. 155-162, Gordon and Breach, Amsterdam, 1996.

\bibitem{AF} N. Arcozzi and F. Ferrari, {\it The Hessian of the distance from a surface in the Heisenberg group}, Ann. Acad. Sci. Fenn. Math., {\bf 33} (2008), no. 1, 35-63.

\bibitem{Ba} F. Baudoin, E. Grong, K. Kuwada and A. Thalmaier, {\it Sub-Laplacian comparison theorems on totally geodesic Riemannian foliations}, Calc. Var. Partial Differential Equations, {\bf 58} (2019), Paper No. 130.

\bibitem{Be} A. Bella\"{\i}che and J.-J. Risler (eds.), {\it Sub-Riemannian geometry}, Progress in Mathematics, Vol. 144, Birkh\"{a}user, 1996.

\bibitem{BB} G. Bianchi, K. B\"{o}r\"{o}czky,
     A. Colesanti and D. Yang, {\it The $L_p$-Minkowski
     problem for $-n<p<1$}, Adv. Math.,
      {\bf 341} (2019), 493-535.

\bibitem{BH} K. J. B\"{o}r\"{o}czky, M. Henk and H. Pollehn, {\it Subspace concentration of dual curvature measures
of symmetric convex bodies}, J. Differential Geom., {\bf 109} (2018), 411-429.

\bibitem{BLYZ1} K. J. B\"{o}r\"{o}czky, E. Lutwak, D. Yang and G. Zhang, {\it The logarithmic Minkowski problem}, J. Amer. Math. Soc., {\bf 26} (2013), 831-852.

\bibitem{BLYZZ} K. J. B\"{o}r\"{o}czky, E. Lutwak, D. Yang, G. Zhang and Y. Zhao, {\it The dual Minkowski problem for symmetric convex bodies}, Adv. Math., {\bf 356} (2019), 106805.

\bibitem{Br} H. Brunn, \"{U}ber Ovale und Eifl\"{a}chen, Dissertation, M\"{u}nchen (1887).


\bibitem{Ca} L. A. Caffarelli, {\it Interior $W^{2,p}$ estimates for solutions of the Monge-Amp\`{e}re equation}, Ann. of Math., {\bf 131} (1990), 135-150.

\bibitem{Cal} A. Calogero, G. Carcano and R. Pini, {\it On weakly H-quasiconvex functions on the Heisenberg group}, J. Convex Anal., {\bf 15} (2008), 753-766.

\bibitem{Cap} L. Capogna, D. Danielli, S. D. Pauls and J.  Tyson, {\it An introduction to the Heisenberg group and the sub-Riemannian isoperimetric problem}, Progress in Mathematics, 259. Birkh\"{a}user Verlag, Basel, 2007.

\bibitem{Ch} W. Chen, {\it $L_p$ Minkowski problem with not necessarily positive data}, Adv. Math., {\bf 201} (2006), 77-89.

\bibitem{CL} S. Chen and Q.-R. Li, {\it On the planar dual Minkowski problem}, Adv. Math., {\bf 333} (2018), 87-117.

\bibitem{C} S. Y. Cheng and S. T. Yau, {\it On the regularity of the solution of the $n$-dimensional Minkowski problem},  Comm. Pure Appl. Math., {\bf 29} (1976), 495-516.

\bibitem{CW} K. S. Chou and X.-J. Wang, {\it The $L_p$-Minkowski problem and the Minkowski problem in
centroaffine geometry}, Adv. Math., {\bf 205} (2006), 33-83.

\bibitem{D} D. Danielli, N. Garofalo and D. M. Nhieu, {\it Notions of convexity in Carnot groups}, Comm. Anal. Geom., {\bf 11} (2003), 263-341.

\bibitem{FJ} W. Fenchel and B. Jessen, {\it Mengenfunktionen und konvexe Korper}, Danske Vid. Selsk. Mat.-Fys. Medd, {\bf 16} (1938), 1-31.

\bibitem{Fr} M. Francescopaolo, {\it Hypersurfaces and variational formulas in sub-Riemannian Carnot groups}, J. Math. Pures Appl., {\bf 87} (2007) 453-494.

\bibitem{FP} V. Franceschi and D. Prandi, {\it Hardy-type inequality for the Carnot-Caratheodory distance in the Heisenberg group}, J. Geom. Anal., {\bf 31} (2021) 2455-2480.

\bibitem{G} R. J. Gardner, {\it Geometric tomography},  Cambridge Univ. Press, Cambridge, 1995.

\bibitem{GH} R. J. Gardner, D. Hug and W. Weil, {\it The Orlicz-Brunn-Minkowski theory: a general framework, additions, and inequalities}, J. Differential Geom., {\bf 97} (2014), 427-476.

\bibitem{GT} N. Garofalo and  F. Tournier, {\it New properties of convex functions in the Heisenberg group}, Trans. Amer. Math. Soc., {\bf 358} (2005), 2011-2055.

\bibitem{GM} C. E. Guti\'{e}rrez and A. Montanari, {\it Maximum and comparison principles for convex functions on the Heisenberg group}, Comm. Partial Differential Equations, {\bf 29} (2004), 1305-1334.

\bibitem{Hab} C. Haberl, E. Lutwak, D. Yang and G. Zhang, {\it The even Orlicz Minkowski problem}, Adv. Math., {\bf 224} (2010), 2485-2510.

\bibitem{Haj} P. Haj{\l}asz and S. Zimmerman, {\it Geodesics in the Heisenberg group}, Anal. Geom. Metr. Spaces, {\bf 3} (2015), 325-337.

\bibitem{HP} M. Henk and H. Pollehn, {\it  Necessary subspace concentration conditions for the even dual Minkowski problem}, Adv. Math., {\bf 323} (2018), 114-141.

\bibitem{HH} Q. Huang and B. He, {\it On the Orlicz Minkowski problem for polytopes},  Discrete Comput Geom., {\bf 48} (2012), 281-297.

\bibitem{Ha} Y. Huang, E. Lutwak, D. Yang and G. Zhang, {\it Geometric measures in the dual Brunn-Minkowski theory and their associated Minkowski problems}, Acta Math., {\bf 216} (2016), 325-388.

\bibitem{HLYZ} D. Hug, E. Lutwak, D. Yang and G. Zhang, {\it On the $L_p$ Minkowski problem for polytopes}, Discrete Comput. Geom., {\bf 33} (2005), 699-715.

\bibitem{J} M. Y. Jiang, {\it Remarks on the 2-dimensional $L_p$-Minkowski problem}, Adv. Nonlinear Stud., {\bf 10} (2010), 297-313.

\bibitem{L} H. Lewy, {\it On differential geometry in the large. I. Minkowski's problem}, Trans.
Amer. Math. Soc., {\bf 43} (1938), 258-270.

\bibitem{Li} A. J. Li, {\it The generalization of Minkowski problems for polytopes}, Geom. Dedicata, {\bf 168} (2014), 245-264.

\bibitem{LX} H. Li and B. Xu, {\it Hyperbolic $p$-sum and Horospherical $p$-Brunn-Minkowski theory in hyperbolic space}, arXiv:2211.06875.

\bibitem{LLL} Q-R. Li, J. Liu and J. Lu, {\it Non-uniqueness of solutions to the dual $L_p$-Minkowski problem}, IMRN, (2022), 9114-9150.

\bibitem{LSW} Q-R. Li, W. Sheng and X-J. Wang, {\it Flow by Gauss curvature to the Aleksandrov and dual Minkowski problems}, J. Eur. Math. Soc., {\bf 22} (2020), 893-923.

\bibitem{LuM} G. Lu, J. J. Manfredi and B. Stroffolini, {\it Convex functions on the Heisenberg group}, Calc. Var. Partial Differential Equations, {\bf 19} (2004), 1-22.

\bibitem{LW} J. Lu and X.-J. Wang, {\it Rotationally symmetric solution to the $L_p$-Minkowski problem},  J. Differential Equations, {\bf 254} (2013), 983-1005.

\bibitem{L1} E. Lutwak, {\it The Brunn-Minkowski-Firey theory. I. Mixed volumes and the Minkowski problem}, J. Differential Geom., {\bf 38} (1993), 131-150.

\bibitem{LO} E. Lutwak and V. Oliker, {\it On the regularity of solutions to a generalization of the Minkowski problem}, J. Differential Geom., {\bf 62} (1995), 17-38.

\bibitem{LYZ2} E. Lutwak, D. Yang and G. Zhang, {\it On the $L_p$-Minkowski problem}, Trans. Amer. Math. Soc., {\bf 356} (2004), 4359-4370.

\bibitem{LYZ5} E. Lutwak, D. Yang and G. Zhang, {\it $L_p$ dual curvature measures}, Adv. Math., {\bf 329} (2018), 85-132.

\bibitem{Ma} V. Marenich, {\it Geodesics in Heisenberg groups}, Geom. Dedicata, {\bf 66} (1997), 175-185.

\bibitem{M0} H. Minkowski, {\it Allgemeine Lehrs\"{a}tze \"{u}ber die convexen Polyeder}, Nachr. Ges. Wiss.
G\"{o}ttingen, (1897), 198-219.

\bibitem{M} H. Minkowski, {\it Volumen und Oberfl\"{a}che}, Math. Ann., {\bf 57} (1903), 447-495.

\bibitem{Mon1} F. Montefalcone, {\it Hypersurfaces and variational formulas in sub-Riemannian Carnot groups}, J. Math. Pures Appl., {\bf 87} (2007), 453-494.

\bibitem{Mon} R. Monti, {\it Some properties of Carnot-Carath\'{e}odory balls in the Heisenberg group}, Atti Accad. Naz. Lincei Cl. Sci. Fis. Mat. Natur. Rend. Lincei (9) Mat. Appl., {\bf 11} (2000), 155-167.

\bibitem{Mo} R. Monti, {\it Isoperimetric problem and minimal surfaces in the Heisenberg group}, Geometric measure theory and real analysis, 57-129, CRM Series, 17, Ed. Norm., Pisa, 2014.

\bibitem{Mo1} R. Monti and M. Rickly, {\it Geodetically convex sets in the Heisenberg group}, J. Convex. Anal., {\bf 12} (2005), 187-196.

\bibitem{N} L. Nirenberg, {\it The Weyl and Minkowski problems in differential geometry in the large}, Comm. Pure Appl. Math., {\bf 6} (1953), 337-394.

\bibitem{P1} A. V. Pogorelov, {\it The Minkowski Multidimensional Problem},  Winston and Sons, Washington, DC, 1978.

\bibitem{S} R. Schneider, {\it Convex Bodies: The Brunn-Minkowski theory}, 2nd edn, Cambridge University Press, Cambridge, 2014.

\bibitem{Sim} L. Simon, {\it  Lectures on geometric measure theory}, In: Proceedings of the Centre for Mathematical Analysis, Australian National University, vol. 3, Australian National University Centre for Mathematical
    Analysis, Canberra (1983).

\bibitem{St} A. Stancu, {\it The discrete planar $L_0$-Minkowski problem}, Adv. Math., {\bf 167} (2002), 160-174.

\bibitem{SY} M. Sun and X. Yang, {\it Some properties of quasiconvex function on the Heisenberg group}, Acta Math. Appl. Sin., Engl. Ser., {\bf 21} (2005), 571-580.

\bibitem{SY1} M. Sun and X. Yang, {\it Quasi-convex functions in Carnot groups}, Chin. Ann. Math. Ser. B,  {\bf 28B} (2007), 235-242.

\bibitem{U} J. Urbas, {\it An expansion of convex hypersurfaces},  J. Differential Geom., {\bf 33} (1991), 91-125.

\bibitem{Y} Q. Yang, {\it Hardy type inequalities related to Carnot-Carath\'{e}odory distance on the Heisenberg group}, Proc. Amer. Math. Soc.,  {\bf 141} (2013), 351-362.

\bibitem{Zha1} Y. Zhao, {\it The dual Minkowski problem for negative indices}, Calc. Var. Partial Differential Equations, {\bf 56}:18 (2017), 1-18.

\bibitem{Zha2} Y. Zhao, {\it Existence of solution to the even dual Minkowski problem}, J. Differential Geom., {\bf 110} (2018), 543-572.

\bibitem{Z} G. Zhu, {\it The $L_p$ Minkowski problem for polytopes for $0<p<1$}, J. Funct. Anal., {\bf 269} (2015), 1070-1094.

\bibitem{Z1} G. Zhu, {\it  The centro-affine Minkowski problem for polytopes}, J. Differential Geom., {\bf 101} (2015), 159-174.

\bibitem{ZZ} B. Zhu, S. Xing and D. Ye, {\it The dual Orlicz-Minkowski problem}, J. Geom. Anal., {\bf 28} (2018), 3829-3855.

\end{thebibliography}
\end{document}